\newcommand{\beq}{\begin{equation}}
\newcommand{\eeq}{\end{equation}}
\newcommand{\R}{{\mathbb R}}         
\newcommand{\Z}{{\mathbb Z}}
\def\eps{\varepsilon}
\newtheorem{prop}{Proposition}
\newtheorem{theoreme}[prop]{Theorem}
\newtheorem{lem}[prop]{Lemma}
\newtheorem{rem}[prop]{Remark}
\numberwithin{equation}{section}
\numberwithin{prop}{section}
\begin{document}
\title{ Stability and instability  of the KdV solitary wave  under the KP-I flow}
\author{Frederic Rousset}
\address{IRMAR, Universit\'e de Rennes 1, campus de Beaulieu,  35042  Rennes cedex, France}
\email{ frederic.rousset@univ-rennes1.fr  }
\author{Nikolay Tzvetkov}
\address{University of Cergy-Pontoise, UMR CNRS 8088, Cergy-Pontoise, F-95000 and IUF}
\email{nikolay.tzvetkov@u-cergy.fr}
\date{}
\begin{abstract}

We consider the KP-I  and  gKP-I  equations in $\mathbb{R}\times ( \mathbb{R}/2\pi \mathbb{Z})$. We prove that
    the KdV soliton with subcritical speed $0<c<c^*$  is orbitally
stable under the  global KP-I flow  constructed  by Ionescu and Kenig \cite{IK}.  For supercritical speeds $c>c^*$,  in the spirit of the work by Duyckaerts and Merle
\cite{DM}, 
we  sharpen our previous instability result and
 construct a  global solution which is  different from the solitary wave and its translates  and which converges to the solitary wave as time  goes to infinity.
This last result also holds  for the gKP-I equation.
\end{abstract}
\maketitle
\section{Introduction}
Consider the KP-I equation
\begin{equation}\label{KPI}
\partial_t u+u\partial_x u+\partial_x^3 u-\partial_x^{-1}\partial_y^2u=0,\quad x\in\R,\,\, y\in S^1\equiv \R\slash (2\pi\Z).
\end{equation}
The antiderivative $\partial_{x}^{-1}$  in \eqref{KPI} is defined as the multiplication of the Fourier transform with the (singular) factor $(i\xi)^{-1}$. 
This equation introduced in \cite{K-P} is a model equation  for the propagation of long waves weakly modulated
 in the transverse direction which arises in many physical models. For example, it is an asymptotic model for  the propagation of capillary-gravity waves, 
  (see  \cite{Alvarez-Lannes} for example), or for travelling waves in Bose-Enstein condensates (see \cite{Chiron-Rousset} for example).
   
Let us consider the KdV soliton
\begin{equation}\label{soliton}
S_{c}(t,x)=c\,Q(\sqrt{c}(x-ct)),\quad c>0,\quad Q(x)=3\,{\rm ch}^{-2}(x/2)\,.
\end{equation}
We have that $S_c$ is a solution of the KP-I equation \eqref{KPI}. It is independent of $y$ and thus it is also a solution of
the KdV equation
$$
\partial_t u+u\partial_x u+\partial_x^3 u=0.
$$
The following  classical result is due to Benjamin \cite{Be}.
\begin{theoreme}[\cite{Be}]
$S_c(t,x)$ is orbitally stable as a solution of the KdV equation. 
\end{theoreme}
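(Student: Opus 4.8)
The plan is to use the classical energy--momentum method (Benjamin, Bona, Weinstein; see also Grillakis--Shatah--Strauss), which exploits the Hamiltonian structure of KdV. Writing the equation as $\partial_t u=\partial_x E'(u)$ with the energy
\begin{equation*}
E(u)=\int_\R\Big(\frac12(\partial_x u)^2-\frac16 u^3\Big)\,dx,
\end{equation*}
the second conserved quantity is the momentum $M(u)=\frac12\int_\R u^2\,dx$, and the soliton moves at constant speed $c$. First I would introduce the Lyapunov functional $\Phi_c=E+cM$ and record that the profile $\phi_c(x)=cQ(\sqrt c\,x)$ is a critical point, i.e.\ it solves the Euler--Lagrange equation $-\phi_c''+c\phi_c-\frac12\phi_c^2=0$. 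This turns orbital stability into a coercivity statement for $\Phi_c$ in a neighbourhood of the orbit $\{\phi_c(\cdot-s):s\in\R\}$.

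The analytic heart of the argument is the spectral study of the self-adjoint operator given by the second variation,
\begin{equation*}
L:=\Phi_c''(\phi_c)=-\partial_x^2+c-\phi_c.
\end{equation*}
I would show that this one-dimensional Schr\"odinger operator has exactly one negative eigenvalue, a simple kernel spanned by $\partial_x\phi_c$ (coming from translation invariance), and otherwise spectrum bounded below by a positive constant. The count is pinned down by Sturm--Liouville theory: since $\partial_x\phi_c$ has a single sign change it is the second eigenfunction, so the ground state lies strictly below the kernel and there is precisely one negative direction.

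Next I would convert the spectral picture into coercivity modulo the symmetry. The conservation of $M$ supplies one scalar constraint and the translation mode $\partial_x\phi_c$ is removed by modulating the soliton's position; the only remaining obstruction is the single negative direction of $L$. The Vakhitov--Kolokolov / Grillakis--Shatah--Strauss criterion controls this direction: setting $d(c)=\Phi_c(\phi_c)$ one has $d'(c)=M(\phi_c)$, and stability holds provided $d''(c)=\frac{d}{dc}M(\phi_c)>0$. Here the scaling $\phi_c(x)=cQ(\sqrt c\,x)$ gives $M(\phi_c)=\frac12 c^{3/2}\int_\R Q^2\,dx$, hence $\frac{d}{dc}M(\phi_c)=\frac34 c^{1/2}\int_\R Q^2\,dx>0$ for every $c>0$, so the criterion is verified for all speeds.

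Finally I would close the argument by a modulation/continuity scheme. Given $u_0$ close to $\phi_c$ in $H^1$, the implicit function theorem produces for each $t$ a shift $s(t)$ so that $v(t):=u(t,\cdot+s(t))-\phi_c$ is orthogonal to $\partial_x\phi_c$. Using that $\Phi_c$ is conserved and the expansion $\Phi_c(u)=\Phi_c(\phi_c)+\frac12\langle Lv,v\rangle+o(\|v\|_{H^1}^2)$, together with the coercivity $\langle Lv,v\rangle\geq\alpha\|v\|_{H^1}^2$ on the constrained subspace, one bootstraps that $\|v(t)\|_{H^1}$ stays small for all time. I expect the coercivity step to be the main obstacle: taming the negative eigenvalue of $L$ requires carefully combining the mass constraint with the slope condition $d''(c)>0$, and one must ensure the modulation indeed keeps the remainder in the subspace where $L$ is positive definite.
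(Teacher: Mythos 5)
Your sketch is the standard and correct energy--momentum argument (Benjamin/Bona/Weinstein, Grillakis--Shatah--Strauss), and it matches exactly the machinery this paper relies on: the statement is quoted from \cite{Be} without proof, and the only ingredient of that proof the paper actually uses is the coercivity of the second variation $B^c_0$ under orthogonality to $Q_c$ and $Q_c'$, imported as Lemma~\ref{lem1} with a citation to \cite{BSS}. Your identification of the key points --- the spectral count for $L=-\partial_x^2+c-\phi_c$, the slope condition $\frac{d}{dc}M(\phi_c)=\frac34 c^{1/2}\int Q^2>0$, and the modulation to kill the translation mode --- is accurate, so nothing is missing relative to the classical proof the paper cites.
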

It is then a natural and physically relevant question to ask whether $S_c(t,x)$ is stable as a solution of the KP-I equation \eqref{KPI}.
As will show here the answer is positive if $c$ is small enough while it is known that it is unstable for large $c$ (see below and \cite{RT1,Z2}).

Before addressing the stability issues one should construct a global flow of \eqref{KPI} in a suitable functional framework. 
For that  purpose, following \cite{IK}, we
define the spaces $Z^s=Z^s(\R\times S^1)$ as 
$$
Z^s=\{u\,:\, \|(1+|\xi|^s+|\xi^{-1}k|^s)\hat{u}(\xi,k)\|_{L^2(\R_\xi\times \Z_k)}<\infty\}
$$
 equipped with the natural norm. The following result is due to Ionescu and Kenig.
\begin{theoreme}[\cite{IK}]
\label{IK}
The KP-I equation \eqref{KPI} is globally well-posed for data in $Z^2(\R\times S^1)$.
\end{theoreme}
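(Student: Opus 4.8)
The plan is to prove the theorem in two stages: a local well-posedness theory in $Z^2$, followed by a globalization based on the conservation laws of \eqref{KPI}. The essential difficulty is that KP-I is \emph{quasilinear}: its data-to-solution map fails to be uniformly continuous on $Z^2$, so no contraction-mapping scheme built on classical $X^{s,b}$ bilinear estimates can close. Concretely, the transport nonlinearity $u\partial_x u$ produces, in the high-$x$-frequency interactions, a derivative loss that the dispersion relation $\pm(\xi^3+k^2/\xi)$ does not recover on time intervals of unit length.

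To obtain the local theory I would work in short-time (frequency-localized) Bourgain spaces. After a Littlewood--Paley decomposition $u=\sum_N P_N u$ in $x$, one measures each piece $P_N u$ in an $X^{s,b}$-type norm, but only on time intervals of length $\sim N^{-1}$, the scale at which the transport term decouples from the dispersion. This yields a triad of spaces — a solution space $F^s$, a nonlinearity space $N^s$, and an energy space $E^s$ — and the iteration closes once one establishes: (i) a linear estimate $\|u\|_{F^s}\lesssim \|u\|_{E^s}+\|\mathcal L u\|_{N^s}$, where $\mathcal L=\partial_t+\partial_x^3-\partial_x^{-1}\partial_y^2$; (ii) a nonlinear estimate $\|\partial_x(u^2)\|_{N^s}\lesssim \|u\|_{F^s}^2$; and (iii), the crux of the matter, an energy estimate $\|u\|_{E^s}^2\lesssim \|u_0\|_{Z^s}^2+\|u\|_{F^s}^3$. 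Estimate (iii) is where the algebraic structure of the equation must be used: the dangerous high-low frequency interactions are tamed by a normal-form / symmetrization argument exploiting the resonance relation, so that the apparent loss of a derivative cancels after integration by parts in time. Taking $s=2$ then gives existence on a time interval whose length depends only on $\|u_0\|_{Z^2}$.

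For globalization I would invoke the conserved quantities of KP-I. Beyond the $L^2$ mass $\int u^2$ and the Hamiltonian $E(u)=\int \tfrac12(\partial_x u)^2+\tfrac12(\partial_x^{-1}\partial_y u)^2-\tfrac16 u^3$, the equation possesses a higher-order invariant controlling two $x$-derivatives together with the antiderivative quantity $\partial_x^{-1}\partial_y u$ — precisely the two weights $|\xi|^2$ and $|\xi^{-1}k|^2$ appearing in the $Z^2$ norm. The point is to show that these functionals are coercive: the cubic (and lower-order) contributions must be absorbed via Gagliardo--Nirenberg and Sobolev inequalities on $\R\times S^1$, so that conservation of finitely many invariants yields a uniform-in-time bound on $\|u(t)\|_{Z^2}$. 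Since the local existence time depends only on this norm, the a priori bound allows the local theory to be iterated indefinitely, producing a global solution.

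The main obstacle, beyond estimate (iii), will be uniqueness and continuous dependence, which cannot follow from a contraction because the flow is not Lipschitz. I would establish them by an energy method comparing two solutions in a weaker norm (losing derivatives), and then recover continuity in $Z^2$ itself through a Bona--Smith regularization: approximate the datum by smooth data, control the difference of the corresponding solutions using the weaker-norm estimate together with uniform $Z^2$ bounds, and pass to the limit. Closing this difference estimate against the same derivative loss met in the existence proof — now for a difference of two solutions, where the symmetrization is less clean — is what pins the argument to the regularity $s=2$.
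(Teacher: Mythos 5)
This theorem is not proved in the paper at all: it is quoted from Ionescu--Kenig \cite{IK} and used as a black box (all the authors need is the existence of a global flow on $Z^2$ along which the $L^2$ norm and the Hamiltonian are conserved). So there is no internal proof to compare yours against; the relevant comparison is with \cite{IK} itself. Your roadmap does correctly identify the architecture of that proof: a short-time, frequency-localized Bourgain-space framework with the $F^s/N^s/E^s$ triad to handle the quasilinear derivative loss in $u\partial_x u$, a symmetrized energy estimate as the crucial third ingredient, globalization via the first three conservation laws (consistent with the paper's own remark that three invariants are needed, and with your observation that the weights $|\xi|^2$ and $|\xi^{-1}k|^2$ in the $Z^2$ norm are exactly what the third invariant should control), and a Bona--Smith regularization to obtain uniqueness and continuity of the flow in the absence of uniform continuity of the data-to-solution map.

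As a proof, however, the proposal is empty precisely where the difficulty lies. Each of your estimates (i)--(iii) is asserted rather than established, and each is a substantial piece of harmonic analysis: the bilinear estimate (ii) requires a full case analysis of the resonance function $\Omega=(\xi_1^3+k_1^2/\xi_1)+(\xi_2^3+k_2^2/\xi_2)-(\xi^3+k^2/\xi)$ over all frequency interactions, including the region $\xi$ near $0$ which is singular because of the antiderivative $\partial_x^{-1}$ and is further constrained by the discreteness of $k$; the energy estimate (iii) requires identifying exactly which high-low interactions lose a derivative and verifying that the commutator structure of $\partial_x(u^2)$ cancels them after symmetrization, which you only name as a ``normal-form argument''; and the coercivity of the third invariant, which you take for granted, requires absorbing non-sign-definite cubic and quartic terms (of the type $\int u\,(\partial_x^{-1}\partial_y u)^2$ and $\int u^2\,\partial_x^{-2}\partial_y^2 u$) using the anisotropic inequality \eqref{sobolev}. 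None of these steps is carried out or even reduced to a checkable claim, so what you have written is a correct table of contents for \cite{IK}, not a proof; in the context of the present paper the honest course is to do what the authors do and cite the result.
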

The spaces $Z^s$ are closely related to the structure of the KP-I conservation laws (see \cite{ZS}). 
The above global well-posedness requires  the use of the  three first invariants, nevertheless, in this paper,  we shall 
 only  need to manipulate the two first  KP-I conservation laws. 
Let us recall them. By multiplying \eqref{KPI} by $u$ and integrating over $\R\times S^1$, we obtain that the $L^2$ norm is formally conserved under 
the KP-I flow. Next, by multiplying \eqref{KPI} by $\frac{1}{2}u^2+\partial_x^2u-\partial_{x}^{-2}\partial_{y}^2u$ and an integration over $\R\times S^1$ yields
that the energy (which is the Hamiltonian)
$$
E(u)=\frac{1}{2}\int\big((\partial_x u)^2+(\partial_{x}^{-1}\partial_{y}u)^2\big)-\frac{1}{6}\int u^3
$$
is also formally conserved by the KP-I flow.
By using an anisotropic Gagliardo-Nirenberg inequality (see \eqref{sobolev} below),  
the space $Z^1$ can be identified as the energy space for the KP-I equation, i.e. the space
of functions such that the  two first  conservation laws are finite.

In our previous work, we  have proven the instability of $S_c$ as a solution of the KP-I equation \eqref{KPI} for $c$ large enough.
\begin{theoreme}[\cite{RT1}]\label{zak}
$S_c(t,x)$ is orbitally unstable as a solution of the KP-I equation \eqref{KPI}  provided $c>4/\sqrt{3}$. 
More precisely,  for every $s\geq 0$ there exists $\eta>0$ such that for every $\delta>0$ there exists $u_0^{\delta}\in Z^2\cap H^s$ and a time $T^\delta \approx
|\log\delta|$ such that
$$
\|u_0^{\delta}(x,y)-S_c(0,x)\|_{H^s(\R\times S^1)}+\|u_0^{\delta}(x,y)-S_c(0,x)\|_{Z^1(\R\times S^1)}<\delta
$$
and  the (global) solution of the KP-I equation, defined  by  Theorem  \ref{IK} with data $u_0^\delta$ satisfies
\beq
\label{estinstab}
\inf_{a\in\R}\|u(T^\delta,x-a,y)-S_c(T^\delta,x)\|_{L^2(\R\times S^1)}>\eta.
\eeq
\end{theoreme}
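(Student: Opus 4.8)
\medskip
\noindent\textbf{Proof proposal.}
The plan is to reduce Theorem \ref{zak} to the existence of an exponentially growing mode for the linearization of \eqref{KPI} around $S_c$, and then to upgrade this linear instability to the nonlinear statement \eqref{estinstab} by a Duhamel bootstrap along the global flow of Theorem \ref{IK}. Work in the moving frame $\xi=x-ct$, in which $S_c(\xi)=cQ(\sqrt c\,\xi)$ is stationary and satisfies $cS_c-\tfrac12 S_c^2-\partial_\xi^2 S_c=0$, and write a solution as $u=S_c+v$. The linearized equation is $\partial_t v=\mathcal A v$ with
\begin{equation*}
\mathcal A v=\partial_\xi(\mathcal L_c v)+\partial_\xi^{-1}\partial_y^2 v,\qquad \mathcal L_c=-\partial_\xi^2+c-S_c,
\end{equation*}
where $\mathcal L_c$ is the self-adjoint linearized KdV operator. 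Since $y\in S^1$, decompose $v=\sum_{k\in\Z}v_k(\xi)e^{iky}$; each transverse mode evolves independently, and a growing mode $v_k=e^{\sigma t}\varphi(\xi)$ solves $\sigma\varphi=\partial_\xi\mathcal M_{c,k}\varphi$ with $\mathcal M_{c,k}=\mathcal L_c-k^2\partial_\xi^{-2}$. This has the Hamiltonian form $\sigma=\partial_\xi\cdot(\text{self-adjoint})$, so instability amounts to locating an eigenvalue $\sigma$ with $\mathrm{Re}\,\sigma>0$. The rescaling $\xi\mapsto\xi/\sqrt c$, $y\mapsto cy$ reduces everything to the normalized soliton $Q$ and shows that the only admissible transverse wavenumbers are $\kappa=k/c$, $k\in\Z$, the smallest nonzero one being $1/c$.

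The heart of the matter is this spectral step. The Schrödinger operator $\mathcal L_1=-\partial_\xi^2+1-Q$ is an explicitly solvable P\"oschl--Teller operator (since $Q=3\,\mathrm{sech}^2(\xi/2)$), with point spectrum $\{-5/4,\,0,\,3/4\}$, eigenfunctions $\mathrm{sech}^3(\xi/2)$, $Q'$ and an explicit third mode, and essential spectrum $[1,\infty)$. For $\kappa=0$ the operator $\partial_\xi\mathcal L_1$ has a zero eigenvalue of algebraic multiplicity two: $\mathcal L_1 Q'=0$ (translation) and $\mathcal L_1(\partial_c S_c|_{c=1})=-Q$ (scaling) provide a size-two Jordan block, reflecting the marginal KdV stability. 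Turning on the perturbation $-\kappa^2\partial_\xi^{-1}$, a Puiseux expansion in the small parameter $\kappa^2$ splits this Jordan block into a pair $\pm\gamma(\kappa)$ with $\gamma(\kappa)\sim \kappa\sqrt{a}$; the coefficient $a$ is a pairing of $Q'$ and $\partial_c S_c$ against $\partial_\xi^{-1}$, which one computes to be positive using the explicit $\mathcal L_1$-data, yielding a genuine real unstable eigenvalue for small $\kappa$. Tracking this eigenvalue and evaluating the solvability condition at which it returns to the origin determines the transverse instability band $(0,\kappa^*)$ with threshold $\kappa^*=\sqrt3/4$. Hence the smallest admissible wavenumber $1/c$ lies in the band precisely when $1/c<\sqrt3/4$, that is $c>4/\sqrt3$, giving an eigenfunction $\Phi(\xi,y)=\mathrm{Re}\big(\varphi(\xi)e^{iy}\big)$ with growth rate $\gamma=\mathrm{Re}\,\sigma>0$.

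With the unstable mode in hand, one needs semigroup control: a resolvent bound separating the finitely many unstable eigenvalues from the rest of the spectrum of the non-self-adjoint, nonlocal generator $\mathcal A$, so that $\|e^{t\mathcal A}\|\lesssim e^{\gamma t}$ on the relevant space. Then take the real data $u_0^\delta=S_c+\delta\,\Phi$, which is smooth, decaying in $\xi$ and carries a single transverse frequency, hence lies in $Z^2\cap H^s$ and is $O(\delta)$-close to $S_c$ in both $H^s$ and $Z^1$. Writing the global solution as $u=S_c+v$ and inserting the exponentially growing linear part into Duhamel's formula, the quadratic nonlinearity is negligible while $\|v\|$ stays small; a bootstrap gives $\|v(t)\|\sim\delta e^{\gamma t}$ up to the time $T^\delta\approx\gamma^{-1}|\log\delta|$ at which $\|v\|$ reaches a fixed size. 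At that time $v$ carries an $O(1)$ component at transverse frequency $k=1$, which no $x$-translate of the $y$-independent soliton $S_c$ can cancel, yielding the uniform lower bound \eqref{estinstab} with $\eta$ independent of $\delta$.

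The main obstacle is the spectral step together with the semigroup estimate: one must prove rigorously that the perturbed eigenvalue is a true isolated eigenvalue with $\mathrm{Re}\,\sigma>0$ rather than a spurious branch or a resonance hidden in the essential spectrum, pin down the exact threshold $4/\sqrt3$ from the explicit $\mathcal L_1$-data, and handle the antiderivative $\partial_\xi^{-1}$ together with the mean-zero constraints it imposes. A secondary difficulty is making the bootstrap compatible with the $Z^2$ well-posedness of Theorem \ref{IK}: the linear exponential estimate and the nonlinear estimates for the nonlocal quadratic term must close in norms that are simultaneously controlled by the flow and by the spectral decomposition, while the instability, produced by a smooth localized growing mode, is finally measured in the weaker $L^2$ norm of \eqref{estinstab}.
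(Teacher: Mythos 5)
First, note that the paper does not reprove Theorem~\ref{zak}: it is quoted from \cite{RT1}, and the only ``proof'' given here is the citation (plus the remark that Zakharov \cite{Z2} obtained it via the Lax pair). So your proposal has to be measured against the method of \cite{RT1}, which the paper describes in detail and reuses in Section~3. Your overall architecture --- an unstable eigenvalue for the linearization at transverse wavenumber $1/c$ with threshold determined by $1/c<\sqrt3/4$, a semigroup/resolvent estimate isolating the unstable spectrum, and a nonlinear upgrade on a time scale $T^\delta\approx\sigma^{-1}|\log\delta|$ --- is indeed the architecture of \cite{RT1}, and your spectral discussion reaches the correct threshold. Your route to it (Puiseux splitting of the length-two Jordan block of $\partial_\xi\mathcal L_c$ and continuation of the branch) is closer in spirit to \cite{RT4} than to what \cite{RT1} actually uses, namely the explicit Evans-function computation of Alexander--Pego--Sachs \cite{Pego} based on the exact solutions $g_\mu(y)=e^{\mu y}(\mu^3+2\mu-3\mu^2\tanh y)$; the perturbative argument alone gives instability for small transverse wavenumbers but does not by itself produce the sharp value $\sqrt3/4$, which is exactly the computation reproduced in the proof of Lemma~\ref{lem2bis}. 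You flag this, so I regard it as an incomplete but correctly oriented step.

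The genuine gap is in your nonlinear step. You propose the classical semilinear scheme: insert the growing mode into Duhamel's formula, treat $v\partial_xv$ as a negligible quadratic perturbation, and bootstrap $\|v(t)\|\sim\delta e^{\gamma t}$. The paper explicitly warns that this cannot work here: ``we have a quasilinear problem for which the semi-linear fixed point argument \dots cannot be used.'' Concretely, the available linear estimate \eqref{semigroup} is $\|e^{-t\mathcal A(n)}\|_{H^{s+1}\to H^s}\lesssim e^{(\sigma+\eta)t}$, i.e.\ it loses a derivative, and the source term $\partial_x(v^2)$ loses another; the Duhamel iteration therefore requires $\|v\|_{H^{s+1}}$ (indeed $\|v\|_{H^{s+2}}$) to control $\|v\|_{H^s}$ and does not close in any fixed Sobolev space. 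The actual proof in \cite{RT1} (and in Section~3 here) circumvents this by Grenier's method \cite{Grenier}: one builds a high-order approximate solution $u_{ap}=\sum_{k=0}^M\delta^k u_k$ whose residual is $O(\delta^{M+1}e^{(M+1)\sigma t})$ as in \eqref{decay}, and then estimates the remainder $v$ by a crude $H^s$ \emph{energy} estimate for the full quasilinear equation, which produces only a rough exponential factor $e^{\Lambda t}$ with $\Lambda$ independent of the spectral data; choosing $M$ large so that $\Lambda<(M+1)\sigma$ makes the remainder subdominant up to the instability time. Without this (or an equivalent device), your bootstrap paragraph is not a proof but a restatement of the semilinear heuristic that fails for \eqref{KPI}. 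A second, smaller point: the conclusion \eqref{estinstab} requires ruling out cancellation by \emph{all} translates $S_c(\cdot-a)$, which is done by projecting onto the nonzero transverse Fourier mode $k=1$ (orthogonal to every $y$-independent function); you assert this correctly but it deserves the one-line justification given after Theorem~\ref{sofia} in the paper.
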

For the proof of Theorem~\ref{zak}, we refer to \cite{RT1}.  Another proof based on the existence of a Lax pair for the  KP equation was obtained previously by Zakharov in \cite{Z2}.
The advantage of the approach of \cite{RT1} is that it is flexible enough to be adapted to more general  and  non-integrable settings.
Namely, in \cite{RT2}, we proved that an analogue of Theorem~\ref{zak} holds if we replace $u\partial_x u$ by $u^p\partial_x u$, $p=2,3$, 
 and  we also  succeeded to extend the approach of \cite{RT1} to prove the transverse instability of capillary-gravity waves in the full water waves system, \cite{RT3}.

 Note that equation \eqref{KPI} is reversible in the sense that if $u(t,x,y)$ is solution of \eqref{KPI}, then  $u(-t, -x, y)$ is also a solution.
Since $S_{c}(-t, -x)= S_{c}(t,x)$, this yields that the instability result of Theorem \ref{zak} also  occurs  for  negative times:  the solution
 of \eqref{KPI} with data $u_{0}^\delta (-x, y)$ verifies  the estimate
 \eqref{estinstab}  at the time $-T^\delta$.

Finally, let us point out  that Theorem \ref{zak} does not yield the instability of  small speeds solitary waves. The stability of these waves is the  first question that
 we want to address in this paper.
Let us now state our first  result.
\begin{theoreme}\label{stability}
$S_c(t,x)$ is orbitally stable as a solution of the KP-I equation \eqref{KPI}, provided  $c<4/\sqrt{3}$. 
More precisely, for every $\varepsilon>0$, there exists $\delta>0$ such that such that if the initial data $u_0$ of the KP-I equation \eqref{KPI} satisfies
 $u_{0} \in Z^2(\mathbb{R} \times S^1)$ and
$$
\|u_0(x,y)-S_c(0,x)\|_{Z^1(\R\times S^1)}<\delta
$$
then the solution of the KP-I equation, defined by Theorem \ref{IK} with data $u_0$ satisfies
$$
\sup_{t\in\R}\inf_{a\in\R}\|u(t,x-a,y)-S_c(t,x)\|_{Z^1(\R\times S^1)}<\varepsilon.
$$
\end{theoreme}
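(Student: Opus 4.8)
The plan is to run the classical Lyapunov/Grillakis--Shatah--Strauss scheme, based on the two conserved quantities $M(u)=\frac12\int u^2$ and the energy $E$ together with the variational characterization of $S_c$. First I would record that $S_c$ is a critical point of the conserved functional $\mathcal L_c:=E+cM$: since $S_c$ is $y$-independent the profile equation $-\partial_x^2S_c+cS_c-\frac12S_c^2=0$ is exactly $\mathcal L_c'(S_c)=0$. Writing a perturbed solution, after an $x$-translation, as $S_c+v$, the fact that $\mathcal L_c$ is cubic in $u$ gives the exact expansion
\begin{equation*}
\mathcal L_c(S_c+v)-\mathcal L_c(S_c)=\tfrac12\,\mathcal Q_c(v)-\tfrac16\int v^3,\qquad
\mathcal Q_c(v)=\int(\partial_x v)^2+\int(\partial_x^{-1}\partial_y v)^2-\int S_c v^2+c\int v^2,
\end{equation*}
the linear term vanishing by the profile equation. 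The whole point is the coercivity of the quadratic form $\mathcal Q_c$ on a finite-codimension subspace, and this is where the threshold $c^*=4/\sqrt3$ enters.

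Since the domain is $\R\times S^1$, I would diagonalize $\mathcal Q_c$ in the transverse frequency, $v=\sum_{k\in\Z}v_k(x)e^{iky}$, which decouples it as $\mathcal Q_c(v)=\sum_k q_k(v_k)$ with $q_k(v_k)=\langle L_cv_k,v_k\rangle+k^2\|\partial_x^{-1}v_k\|_{L^2}^2$ and $L_c=-\partial_x^2+c-S_c$. The $k=0$ sector is precisely the one-dimensional KdV problem: $L_c$ has a single negative eigenvalue, kernel spanned by the translation mode $\partial_x S_c$, and positive continuous spectrum. Imposing the orthogonality $\langle v,\partial_x S_c\rangle=0$ (fixed by modulating the translation parameter, which constrains only $v_0$ because $\partial_x S_c$ is $y$-independent), Weinstein's lemma yields $q_0(v_0)\gtrsim\|v_0\|_{H^1}^2-C\langle v_0,S_c\rangle^2$; the sign hypothesis is the Vakhitov--Kolokolov inequality $\frac{d}{dc}\|S_c\|_{L^2}^2>0$, which holds for \emph{every} $c>0$ since $\|S_c\|_{L^2}^2=c^{3/2}\|Q\|_{L^2}^2$. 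The residual direction $\langle v_0,S_c\rangle$ is then absorbed using conservation of mass, as $M(u)-M(S_c)=\langle S_c,v_0\rangle+\frac12\|v\|_{L^2}^2$ is conserved and of size $O(\delta)$.

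The main obstacle is the coercivity of the transverse modes $k\neq0$, which is exactly where the subcritical/supercritical dichotomy lives. After the rescaling $v_k(x)=w(\sqrt c\,x)$ one finds $q_k(v_k)=\sqrt c\,\big(\langle L_0w,w\rangle+\mu_k\|\partial_z^{-1}w\|_{L^2}^2\big)$ with $L_0=-\partial_z^2+1-Q$ and $\mu_k=k^2/c^2$, so everything reduces to the nonnegativity of the nonlocal operator $\mathcal L_\mu=L_0+\mu|\partial_z|^{-2}$. Because $|\partial_z|^{-2}\ge0$, this form is increasing in $\mu$, hence in $|k|$, so the worst case is $|k|=1$, i.e. $\mu=1/c^2$. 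The explicit spectrum of $L_0$ is available from the Pöschl--Teller structure of $Q$, namely $\{-5/4,0,3/4\}\cup[1,\infty)$ with the negative eigenfunction $\propto\mathrm{ch}^{-3}(z/2)$ and the zero mode $Q'$; the relevant threshold is, however, governed by the generalized eigenvalue problem $L_0w+\mu|\partial_z|^{-2}w=0$ on the admissible (zero-mean) subspace, equivalently $\mu^*=\sup_w\big(-\langle L_0w,w\rangle/\|\partial_z^{-1}w\|_{L^2}^2\big)$. The heart of the matter is to compute this threshold and to prove strict positivity above it: the analysis gives $\mu^*=3/16$, which is precisely the value underlying the instability Theorem~\ref{zak}, since $\mu^*=3/16\Longleftrightarrow c^*=4/\sqrt3$. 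Thus for $c<c^*$ one has $\mu_k\ge1/c^2>3/16=\mu^*$ for all $k\neq0$, so each $\mathcal L_{\mu_k}$ has a strictly positive bottom of spectrum with a uniform gap, giving $q_k(v_k)\gtrsim\|v_k\|_{L^2}^2$; a routine convex-combination argument then upgrades this to the full mode-$k$ $Z^1$-norm, including the weight $k^2\|\partial_x^{-1}v_k\|_{L^2}^2$.

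Finally I would assemble the estimate by a standard continuity/bootstrap argument. Summing the three sectors gives $\mathcal Q_c(v)\gtrsim\|v\|_{Z^1}^2-C\langle v_0,S_c\rangle^2$ for $v\perp\partial_x S_c$; conservation of $\mathcal L_c$ gives $\mathcal Q_c(v(t))=2\big(\mathcal L_c(u_0)-\mathcal L_c(S_c)\big)+\frac13\int v^3=O(\delta^2)+O(\|v\|_{Z^1}^3)$, where the cubic term is controlled by the anisotropic Gagliardo--Nirenberg inequality \eqref{sobolev}, while conservation of mass controls $\langle v_0,S_c\rangle=O(\delta+\|v\|_{Z^1}^2)$. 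Putting these together yields $\|v(t)\|_{Z^1}^2\lesssim\delta^2+\|v(t)\|_{Z^1}^3$, so that as long as $\|v(t)\|_{Z^1}$ stays small the coercivity closes the bound and prevents it from ever reaching $\varepsilon$; the modulation parameter $a(t)$ is obtained by the implicit function theorem throughout. The only genuinely delicate point is the spectral threshold for the transverse mode $k=\pm1$, i.e. the identification $\mu^*=3/16$ and the quantitative positivity for $c<c^*$; the remainder is by now standard orbital-stability machinery.
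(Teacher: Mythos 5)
Your proposal follows essentially the same route as the paper: the Lyapunov functional $E+cM$, the transverse Fourier decomposition, classical KdV coercivity for the zero mode combined with modulation in $x$ and mass conservation, coercivity of the $k\neq 0$ modes reduced by monotonicity in $k^2$ to the worst case $|k|=1$, the anisotropic Gagliardo--Nirenberg inequality \eqref{sobolev} for the cubic term, and a bootstrap. All of these reductions are correct, and the threshold you announce, $\mu^{*}=3/16$, i.e. $c^{*}=4/\sqrt{3}$, is the right one; your $k=0$ treatment (Weinstein's lemma plus the Vakhitov--Kolokolov sign and mass conservation) is an equivalent variant of the paper's normalization $\|v(0)\|_{L^2}=\|Q_c\|_{L^2}$ followed by a small adjustment of the speed $c$.

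The one substantive issue is that the step you explicitly defer --- proving that the generalized eigenvalue problem $L_0w+\mu|\partial_z|^{-2}w=0$ has threshold $\mu^{*}=3/16$, with strict coercivity above it --- is not a detail but the entire content of the theorem; it is exactly Lemma~\ref{lem2bis} of the paper, and the rest is standard machinery. As you note yourself, the explicit spectrum of $L_0$ does not settle it because the perturbation $\mu|\partial_z|^{-2}$ is nonlocal and not small. The paper handles it by substituting $u=\partial_x^{-1}f$, which converts the quadratic form into that of the local fourth-order operator $M_{c}u=u^{(4)}-\big((c-Q_{c})u'\big)'+u$; Weyl's theorem places the essential spectrum in $[1,+\infty)$, and the absence of nonpositive eigenvalues for $c<4/\sqrt{3}$ is then read off from the explicit Alexander--Pego--Sachs solutions $g_{\mu}(y)=e^{\mu y}\big(\mu^{3}+2\mu-3\mu^{2}\tanh y\big)$ of the rescaled ODE $w^{(4)}-4(1-3\,\mathrm{ch}^{-2}y)w''+3\nu^{2}w=0$: decay at $+\infty$ forces $\mu^{3}+2\mu-3\mu^{2}=0$, hence $\mu=1$, $\nu^{2}=1$ and $c^{2}\geq 16/3$. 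To complete your argument you would need to supply this (or an equivalent) computation; without it the claim $\mu^{*}=3/16$ is an assertion, not a proof.
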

 In the case of the gKP equation i.e.  with nonlinearities $u^p\partial_x u$, $p=2,\,3$ instead of $u\partial_x u$,  there is no available  global well-posedness result, nevertheless the arguments that we use for the proof of Theorem \ref{stability}  yield a conditional stability result of solitary waves of small speeds in the sense that the statement of the above Theorem holds as long as the solution  exists. Indeed the three  main 
 points of the proofs  which are  Lemma \ref{lem1} (KdV orbital stability), Lemma \ref{lem2} and the anisotropic Sobolev inequality \eqref{sobolev}
  still hold for $p=2, \, 3$.

Let us notice that Theorem~\ref{stability}  is against  the usual intuition  that the KdV soliton is unstable under the KP-I flow. 

Finally, let us recall that the stability properties of  $S_{c}$  in the KP-II equation 
$$
\partial_t u+u\partial_x u+\partial_x^3 u+\partial_x^{-1}\partial_y^2u=0,\quad x\in\R,\,\, y\in S^1,
$$
are very different.  The spectral stability of the wave was obtained  in \cite{Pego}  for all speeds $c>0$. More recently,  
the full nonlinear stability i.e.  the analogous of Theorem \ref{stability}  with  the space  $Z^1$ changed   in  $L^2$ was also obtained for all  speeds $c>0$ in  \cite{MT}. Moreover in the context of the KP-II equation a suitable asymptotic stability statement is
proved in \cite{MT} while Theorem~\ref{stability} only provides  orbital stability.  

The second goal of this paper is to describe more precisely  the instability statement of Theorem~\ref{zak} by constructing solutions displaying the instability by their 
behavior at infinity. Let us first observe that by changing the frame
$$
v(t,x-ct,y)=u(t,x,y)
$$
it suffices to study the stability of $Q_c(x)=cQ(\sqrt{c}x)$ under the flow of the equation
$$
\partial_t v-c\partial_x v+v\partial_x v+\partial_x^3 v-\partial_x^{-1}\partial_y^2v=0,\quad x\in\R,\,\, y\in S^1,
$$
i.e. we may reduce the matters to studying stationary solutions, by adding the new term $-c\partial_x v$ to the equation.
 We shall thus consider the gKP-I equation in a moving frame
\begin{equation}\label{gKPI}
\partial_t u-c\partial_x u+
u^p\partial_x u+\partial_x^3 u-\partial_x^{-1}\partial_y^2u=0,\quad p=1,2,3,\quad x\in\R,\,\, y\in S^1.
\end{equation}
It is well-known that \eqref{gKPI} still  has stationary solutions of the from $R_{c,p}(x)=c^{\frac{1}{p}}R_{p}(\sqrt{c}x)$, $c>0$, where $R_{p}$ is a rapidly decreasing function. 
This solution is orbitally stable under the flow of the gKdV equation  (see \cite{Weinstein} for example)
\begin{equation*}
\partial_t u-c\partial_x u+
u^p\partial_x u+\partial_x^3 u=0,\quad p=1,2,3,\quad x\in\R.
\end{equation*}
However,  there exists speeds $c$  such that it is unstable as a solution of \eqref{gKPI}. This fact was shown in \cite{RT2}
 Theorem 1. Here we shall extend and make more precise  the result of \cite{RT2} by showing the following statement.
\begin{theoreme}\label{sofia}
Let $s\geq 0$.  For $p=1, \, 2, \, 3$,  there exists $c_{p}^\star>0$ such that for every $c>c^\star_{p} $ the following holds true.
There exists $u_0\in (H^s\cap Z^1)(\R\times S^1)$ and $u\in C([0,\infty);H^s\cap Z^1)$ a solution of \eqref{gKPI} such that $\partial_y u_0\neq 0$ and
\begin{equation}\label{limit}
\lim_{t\rightarrow\infty}\|u(t,x,y)-R_{c,p}(x)\|_{H^s\cap Z^1}=0.
\end{equation}
\end{theoreme}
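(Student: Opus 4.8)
\emph{Proof proposal.} The plan is to build the desired solution on the stable manifold of the stationary state $R:=R_{c,p}$, in the spirit of Duyckaerts and Merle \cite{DM}. Writing $u=R+v$, equation \eqref{gKPI} becomes $\partial_t v=\mathcal{L}v+\mathcal{N}(v)$, where the linearized operator has the Hamiltonian form $\mathcal{L}=\partial_x L$ with $L=(c-R^p)-\partial_x^2+\partial_x^{-2}\partial_y^2$ self-adjoint and $\mathcal{N}(v)=O(v^2)$. Expanding in transverse Fourier modes $v=\sum_{k\in\Z}v_k(x)e^{iky}$ decouples $\mathcal{L}$ into operators $\mathcal{L}_k$; the mode $k=0$ is the (stable) gKdV linearization, while the analysis of \cite{RT1,RT2} provides, for $c>c_p^\star$, a transverse mode $k\neq 0$ for which $\mathcal{L}_k$ possesses a real eigenvalue $\lambda>0$. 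Because $\mathcal{L}=\partial_x L$ is Hamiltonian (equivalently, by the reversibility $u(t,x,y)\mapsto u(-t,-x,y)$ noted after Theorem~\ref{zak}), the spectrum is invariant under $\mu\mapsto-\mu$, so $-\lambda$ is also an eigenvalue, with eigenfunction $Y_-$ that genuinely depends on $y$ (it lives in the sector $k\neq 0$) and is exponentially localized in $x$. In particular $Y_-$ is automatically orthogonal to the translation mode $\partial_x R$, which sits in the sector $k=0$.

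Next I would construct an approximate solution to arbitrary order. Seeking $U_N(t)=R+\sum_{j=1}^N e^{-j\lambda t}\Phi_j$ with $\Phi_1=Y_-$ and inserting this into \eqref{gKPI}, one is led to a triangular hierarchy $(\mathcal{L}+j\lambda)\Phi_j=F_j$, where $F_j$ is a polynomial expression in $\Phi_1,\dots,\Phi_{j-1}$. For $j\geq 2$ the operator $\mathcal{L}+j\lambda$ is invertible on the relevant class of exponentially localized profiles provided $-j\lambda$ avoids $\mathrm{spec}\,\mathcal{L}$; any solvability obstruction coming from the finite-dimensional center space (the generalized kernel generated by the symmetries of \eqref{gKPI}) is absorbed through a modulation of the translation parameter. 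This produces, for each small amplitude $a$, a profile $U_N$ solving \eqref{gKPI} up to a residual of size $O(e^{-(N+1)\lambda t})$ in $H^s\cap Z^1$.

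I would then solve for the exact solution by a fixed point on $[T_0,\infty)$ in an exponentially weighted space. Setting $u=U_N+w$, the remainder obeys $\partial_t w=\mathcal{L}w+G$ with forcing and quadratic terms of size $O(e^{-2\lambda t}\|w\|)+O(e^{-(N+1)\lambda t})$. The finite-dimensional unstable component of $w$ is set to zero by a Lyapunov--Perron argument, prescribing it through the Duhamel integral run from $+\infty$ and thereby fixing the initial unstable data, while the translation parameter is tuned by an orthogonality condition so that it converges to $0$, guaranteeing that the limit is the untranslated profile. The crucial point is the control of the remaining infinite-dimensional part: rather than relying on dispersive decay for the non-self-adjoint semigroup, I would exploit the conserved linearized energy $\langle Lw,w\rangle$ (formally invariant under $\partial_t w=\mathcal{L}w$ because $\mathcal{L}=\partial_x L$), together with the coercivity of $L$ modulo its finitely many negative and neutral directions, to close an a priori bound $\|w(t)\|_{H^s\cap Z^1}\lesssim e^{-(N+\theta)\lambda t}$ for some $\theta>0$ (the $Z^1$ estimate being furnished directly by the energy, the higher $H^s$ regularity being propagated separately). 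Since \eqref{gKPI} is autonomous, a time translation reduces $[T_0,\infty)$ to $[0,\infty)$, and taking $u_0:=u(0)$ yields the asserted global solution; the leading term $a e^{-\lambda t}Y_-$ ensures $\partial_y u_0\neq 0$ and that $u$ is neither $R_{c,p}$ nor a translate thereof, while the remainder estimate gives \eqref{limit}.

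I expect the main obstacle to be precisely this last a priori control at the energy regularity $Z^1$, in the presence of the singular operator $\partial_x^{-1}$ and of essential spectrum of $\mathcal{L}$ on the imaginary axis: the linearized flow does not decay there, so the coercivity of the Hessian $L$ off its finitely many bad directions --- which requires a careful count of the negative directions of the linearized energy--momentum functional for $c>c_p^\star$, adapted to the anisotropic space via the Gagliardo--Nirenberg inequality --- must do all the work. A secondary difficulty is verifying the non-resonance conditions $-j\lambda\notin\mathrm{spec}\,\mathcal{L}$ and the solvability of the hierarchy, and confirming that the modulated translation parameter converges so that the convergence in \eqref{limit} is indeed to $R_{c,p}(x)$ itself.
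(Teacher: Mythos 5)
Your overall architecture matches the paper's: reduce to the stationary frame, obtain the decaying eigenvalue $-\sigma$ from the unstable one by the spectral symmetry coming from reversibility, build a high-order approximate solution whose leading correction is $e^{-\sigma t}$ times the stable eigenfunction (so that $\partial_y u_0\neq 0$ and the solution is not a translate of $R_{c,p}$), and then solve for the remainder by integrating backwards from $t=+\infty$. The paper adds a scaling step (fixed speed $1$, transverse period $L=c$) so as to reuse the spectral analysis of \cite{RT1,RT2,RT4}, and its version of your hierarchy is Duhamel-based rather than elliptic: since $\sigma$ is chosen maximal, the linearized semigroup obeys $\|e^{-t\mathcal A(n)}\|_{H^{s+1}\to H^s}\lesssim e^{(\sigma+\eta)|t|}$, so the integrals $\int_t^\infty e^{-(t-\tau)\mathcal A}(\cdots)\,d\tau$ against forcings of size $e^{-j\sigma\tau}$, $j\geq 2$, converge with no non-resonance condition to check and no modulation of the translation parameter (none is used anywhere in the paper's proof). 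The $Z^1$ control is also obtained differently: not from an energy functional, but from the observation that the solution is a perfect $x$-derivative via its own Duhamel formula, which bounds $\partial_x^{-1}\partial_y u$ directly.

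The genuine gap is in your treatment of the remainder $w$. First, a ``fixed point on $[T_0,\infty)$ in an exponentially weighted space'' for $\partial_t w=\mathcal L w+G$ cannot close in $H^s$: the nonlinearity $w\partial_x w$ (and $\partial_x(u_{ap}w)$) loses a derivative, and the available semigroup bound itself costs a derivative, so the Duyckaerts--Merle/Combet semilinear contraction is exactly what the paper says is unavailable here. The paper replaces it by a quasilinear iteration: $v_{n+1}$ solves a \emph{linear} transport-type problem with coefficient $v_n$, estimated by the classical hyperbolic $H^s$ energy method, which yields $\|S_{v_n}(t,\tau)\|_{H^s\to H^s}\leq e^{\Lambda|t-\tau|}$ with $\Lambda\leq C+C\delta M$; one then beats this growth simply by taking the order $M$ of the approximate solution large enough that $\Lambda<(M+1)\sigma$, and passes to the limit by compactness. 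Second, your substitute for decay of the linear flow --- coercivity of $\langle Lw,w\rangle$ modulo finitely many bad directions --- is precisely what fails to be cheap in the regime $c>c_p^\star$: by the analysis behind Lemma~\ref{lem+} and Remark~\ref{optim}, each transverse mode $n$ with $0<|n|/L<k_0$ contributes a genuinely negative direction of $L$, and these are tied to the stable/unstable eigenfunctions whose components along $w$ would have to be tracked by separate ODE arguments. None of this is needed in the paper's proof, which never invokes the linearized energy: the only spectral input is the maximal growth rate $\sigma$. As written, your closing step is therefore not merely a different route but an incomplete one; to repair it you would either have to supply the quasilinear energy scheme or carry out the full finite-codimension coercivity analysis, the first of which is what the paper actually does.
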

In the case of the equation \eqref{KPI}, we have $c_{1}^\star= 4/\sqrt{3}$. For the gKP equation, we shall only prove
 the existence of $c^\star_{p}$ by an abstract argument. Note that the $c^\star_{p}$ that we shall provide is optimal
  in the sense that for $c<c^\star_{p}$, we have  the conditional stability  of the solitary wave (see Remark \ref{optim}).
Let us observe that the above statement implies the instability of $R_{c,p}$, as a solution of \eqref{gKPI} in the sense of Theorem~\ref{zak}.
Indeed let us show that the above statement implies instability in $Z^1$. We reason by contradiction.
Suppose that $R_{c,p}$ is stable in $Z^1$ (see the statement of Theorem~\ref{stability}). 
The problem is invariant by time translations and thus by applying the stability assumption for $t\gg 1$ thanks to 
\eqref{limit}, and using the stability conclusion backwards in time, we get that for every $\varepsilon>0$ there exists $\alpha(\varepsilon)$ such that
$$
\|u_{0}(x,y)-R_{c,p}(x-\alpha(\varepsilon))\|_{Z^1}<\varepsilon.
$$
On the other hand, using a Fourier expansion in $y$, we have that $u_0(x,y)=v_0(x)+w_{0}(x,y)$, $w_0\neq 0$ and $w_0$ is with zero mean in $y$.
Thus
$$
\|u_{0}(x,y)-R_{c,p}(x-\alpha(\varepsilon))\|_{Z^1}
\geq
\|u_{0}(x,y)-R_{c,p}(x-\alpha(\varepsilon))\|_{L^2}
\geq \|w_0(x,y)\|_{L^2}.
$$
Therefore we obtain that $\|w_0\|_{L^2}<\varepsilon$ for every $\varepsilon>0$, i.e. $w_0=0$ which is a contradiction.

 The result of Theorem~\ref{sofia} may be compared to the classical results of existence of strongly stable manifolds for 
 ordinary differential equations. Indeed,   for $c>c^*_{p}$, the linearization of \eqref{gKPI} about $R_{c,p}$ has an unstable  positive eigenvalue,
  hence by the symmetry of the spectrum due to the Hamiltonian structure, there is also a negative eigenvalue and one
   can find the most negative one. To get Theorem \ref{sofia}, we shall prove that  there is a strongly stable manifold to $R_{c,p}$
    associated to this most negative eigenvalue.  Note that Theorem \ref{sofia} provides an example of a global in time
     solution different from the solitary waves  (recall that for $p=2, \, 3$ no large data global existence result is known).
  Results   of this type were  proven by  
Duyckaerts and Merle \cite{DM}  in the setting  of the Nonlinear Schr\"odinger equation and in \cite{Combet}
 for the gKdV equation for $p > 4$ (note that  we have restricted our study to $p=1, \, 2, \, 3$ since for $p \geq 4$, $R_{c,p}$ is already unstable as a solution of the gKdV equation).
 The main difference
with \cite{DM}, \cite{Combet}  is that here we  have  a quasilinear problem for which  the semi-linear fixed point argument used in these works cannot 
 be used.
We use instead a quasilinear scheme based on the simplest  high order energy method (i.e. the classical one for hyperbolic systems).
    
The remaining part of this paper is organized as follows. In the next section, we prove Theorem~\ref{stability}. Then we present the proof of Theorem~\ref{sofia}.
\section{Proof of the stability theorem}
In this section we prove Theorem~\ref{stability}. We need to study the stability of
$Q_c(x)=cQ(\sqrt{c}x)$ as a solution of 
\begin{equation}\label{KPIpak}
\partial_t v-c\partial_x v+
v\partial_x v+\partial_x^3 v-\partial_x^{-1}\partial_y^2v=0,\quad x\in\R,\,\, y\in S^1.
\end{equation}
Consider the energy (Hamiltonian) associated to \eqref{KPIpak}
$$
H(v)\equiv\int_{-\infty}^{\infty}\int_{0}^{2\pi}\Big[(\partial_x v)^2+(\partial_x^{-1}\partial_{y}v)^2+cv^2-\frac{1}{3}v^3\Big]dxdy.
$$
The quantity $H(v)$ is invariant under the flow of \eqref{KPIpak} established in \cite{IK}.
Write the Taylor expansion
$$
H(Q_c+w)=H(Q_c)+DH(Q_c)[w]+\frac{1}{2}D^2H(Q_c)[w,w]+\frac{1}{6}D^3H(Q_c)[w,w,w],
$$
where 
\begin{eqnarray*}
DH(Q_c)[w] & = & \int_{-\infty}^{\infty}\int_{0}^{2\pi}
\Big[2Q_c'\partial_x w+2cQ_cw-Q_c^2w\Big]dxdy
\\
& = &
\int_{-\infty}^{\infty}\int_{0}^{2\pi}
2w\Big[-Q_c''+cQ_c-\frac{1}{2}Q_c^2\Big]dxdy=0,
\end{eqnarray*}
thanks to the equation solved by $Q_c$. Next
$$
D^2H(Q_c)[w,w]=
2\int_{-\infty}^{\infty}\int_{0}^{2\pi}\Big[(\partial_x w)^2+(\partial_x^{-1}\partial_{y}w)^2+cw^2-Q_c w^2\Big]dxdy
\equiv B^c(w,w).
$$
Of course the quadratic form  $B^c(w,w)$ is induced by  the symmetric form
$$
B^c(w_1,w_2)=\frac{1}{4}(B^c(w_1+w_2,w_1+w_2)-B^c(w_1-w_2,w_1-w_2)).
$$
Finally,
$$
D^3H(Q_c)[w,w,w]=-2\int_{-\infty}^{\infty}\int_{0}^{2\pi}w^3 dxdy.
$$
For $f_1,f_2\in H^1(\R)$, such that $\partial_x^{-1}f_1,\partial_x^{-1}f_2\in L^2(\R)$ and $k\in \Z^{\star}$, we set
$$
B^c_k(f_1,f_2)=2\int_{-\infty}^{\infty}
\Big[f_1'f_2'+k^2(\partial_x^{-1}f_1)(\partial_x^{-1}f_2)+ cf_1f_2-Q_cf_1f_2\Big]dx.
$$
Next, for $f_1,f_2\in H^1(\R)$, we set
$$
B^c_0(f_1,f_2)=2\int_{-\infty}^{\infty}
\Big[f_1'f_2'+ cf_1f_2-Q_cf_1f_2\Big]dx.
$$
Note that $B^c_{0}$ is the bilinear form associated to the second derivative of the KdV Hamiltonian about the KdV solitary wave.
We  thus have the following classical stability  property:
\begin{lem}\label{lem1}
There exists $C>0$ such that for every $c>0$, every $g\in H^1(\R)$ satisfying
$$
\int_{-\infty}^{\infty}g(x)Q_c(x)dx=\int_{-\infty}^{\infty}g(x)Q_c'(x)dx=0
$$
one has
$$
B_0^c(g,g)\geq C\Big(\|g'\|_{L^2}^2+c\|g\|_{L^2}^2\Big).
$$
\end{lem}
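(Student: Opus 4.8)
The plan is to first strip off the $c$-dependence by scaling, and then to obtain the coercivity at $c=1$ from the classical spectral analysis of the linearized KdV operator. For the scaling step I set $g(x)=\phi(\sqrt{c}\,x)$. Using $Q_c(x)=cQ(\sqrt{c}\,x)$ and the profile equation, a direct change of variables gives
$$
B_0^c(g,g)=\sqrt{c}\,B_0^1(\phi,\phi),\qquad \|g'\|_{L^2}^2+c\|g\|_{L^2}^2=\sqrt{c}\,\|\phi\|_{H^1}^2,
$$
while $\int g Q_c\,dx=\sqrt{c}\int\phi Q\,dx$ and $\int g Q_c'\,dx=c\int\phi Q'\,dx$, so the two orthogonality conditions are preserved. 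Hence it suffices to exhibit a single $C>0$, independent of $c$, with $B_0^1(\phi,\phi)\geq C\|\phi\|_{H^1}^2$ for every $\phi\in H^1(\R)$ satisfying $\int\phi Q=\int\phi Q'=0$; uniformity in $c$ is then automatic.

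Writing $B_0^1(\phi,\phi)=2\langle L\phi,\phi\rangle$ with $L=-\partial_x^2+1-Q$, the second step records the spectrum of $L$. Differentiating the profile equation $-Q''+Q-\tfrac12 Q^2=0$ in $x$ yields $LQ'=0$, so $Q'\in\ker L$; since $Q'$ vanishes exactly once it is the first excited state, which forces $L$ to have exactly one simple negative eigenvalue with a positive ground state, kernel equal to $\mathrm{span}(Q')$, essential spectrum $[1,\infty)$, and a spectral gap above $0$. These are standard facts for the explicit potential $Q=3\,\mathrm{ch}^{-2}(\cdot/2)$.

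The central step is to convert this picture into coercivity on $V=\{\phi:\langle\phi,Q\rangle=\langle\phi,Q'\rangle=0\}$: the condition $\phi\perp Q'$ kills the kernel, while $\phi\perp Q$ must neutralize the single negative direction. I will invoke the classical positivity lemma, namely that for an operator with one negative eigenvalue and kernel $\mathrm{span}(Q')$ the form is positive on $\{Q,Q'\}^\perp$ provided $\langle L^{-1}Q,Q\rangle<0$, where $L^{-1}$ acts on $\{Q'\}^\perp=\mathrm{Range}(L)$ (note $\langle Q,Q'\rangle=\tfrac12\int(Q^2)'=0$). To check the sign I differentiate $-Q_c''+cQ_c-\tfrac12 Q_c^2=0$ in $c$ and evaluate at $c=1$, obtaining $L(\partial_c Q_c)=-Q$, whence
$$
\langle L^{-1}Q,Q\rangle=-\langle\partial_c Q_c,Q\rangle=-\tfrac12\tfrac{d}{dc}\|Q_c\|_{L^2}^2\Big|_{c=1}.
$$
Since $\|Q_c\|_{L^2}^2=c^{3/2}\|Q\|_{L^2}^2$ is strictly increasing, this is negative, exactly the required sign. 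A standard compactness/contradiction argument, using the gap at $0$ and the essential spectrum starting at $1$ to prevent a minimizing sequence from escaping, upgrades positivity to $\langle L\phi,\phi\rangle\geq\mu\|\phi\|_{L^2}^2$ on $V$ for some $\mu>0$. Finally I pass to the $H^1$ bound by writing $B_0^1=(1-\theta)B_0^1+\theta B_0^1$, estimating $\int Q\phi^2\leq\|Q\|_{L^\infty}\|\phi\|_{L^2}^2$ in the second copy, and choosing $\theta>0$ small enough to keep a positive multiple of $\|\phi'\|_{L^2}^2+\|\phi\|_{L^2}^2$.

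The hard part will be the constrained positivity of the third step: with only one negative eigenvalue one must verify that the two orthogonality conditions are exactly the right ones and that they produce strict positivity rather than mere nonnegativity. This is precisely where the sign $\tfrac{d}{dc}\|Q_c\|_{L^2}^2>0$ (the Vakhitov--Kolokolov / Grillakis--Shatah--Strauss stability condition) is used, and where the spectral gap must be invoked both to control the minimization and to make the final constant strictly positive.
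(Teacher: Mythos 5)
Your proof is correct, and its skeleton is the same as the paper's: rescale to reduce to $c=1$, then invoke the constrained coercivity of the linearized KdV Hessian under the two orthogonality conditions. The difference is in how much you unpack. The paper treats the $c=1$ coercivity $B_0^1(\phi,\phi)\gtrsim\|\phi\|_{H^1}^2$ as a black box, citing the classical KdV stability theory (Bona--Souganidis--Strauss), whereas you reprove it: Sturm oscillation for $LQ'=0$ to locate the single negative eigenvalue and the kernel, the positivity criterion on $\{Q,Q'\}^{\perp}$ reduced to the sign of $\langle L^{-1}Q,Q\rangle$, and the Vakhitov--Kolokolov computation $\langle L^{-1}Q,Q\rangle=-\tfrac12\tfrac{d}{dc}\|Q_c\|_{L^2}^2\big|_{c=1}=-\tfrac34\|Q\|_{L^2}^2<0$ via $L(\partial_cQ_c)|_{c=1}=-Q$ and $\|Q_c\|_{L^2}^2=c^{3/2}\|Q\|_{L^2}^2$. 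All of these steps check out (including the needed compatibility $\langle Q,Q'\rangle=0$ and the compactness upgrade from strict positivity to a coercivity constant, which does require the gap above $0$ and the essential spectrum at $1$ that you invoke). A second, minor difference: your scaling identity $\|g'\|_{L^2}^2+c\|g\|_{L^2}^2=\sqrt{c}\,\|\phi\|_{H^1}^2$ matches the scaling of $B_0^c$ exactly, so the $c$-uniform constant and the gradient term come out in one stroke; the paper instead only extracts $B_0^c(g,g)\gtrsim c\|g\|_{L^2}^2$ from the scaling and recovers the $\|g'\|_{L^2}^2$ term afterwards from the crude bound $B_0^c(g,g)\geq 2\|g'\|_{L^2}^2-4c\|g\|_{L^2}^2$ (using $Q_c\leq 3c$) and a convex combination. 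Your route is self-contained and marginally cleaner on the scaling; the paper's is shorter because it delegates the spectral work to the literature.
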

\begin{proof}
Set $f(x)=g(c^{-\frac{1}{2}}x)$. Then
$$
\int_{-\infty}^{\infty}f(x)Q(x)dx
=c^{-\frac{1}{2}}\int_{-\infty}^{\infty}g(x)Q_c(x)dx=0.
$$
Similarly
$$
\int_{-\infty}^{\infty}f(x)Q'(x)dx=0.
$$
Therefore by the stability theory of $Q$ as solutions of the KdV equation (see e.g. \cite{BSS} and the references therein) we have
$$
B_{0}^{1}(f,f)\geq C\|f\|_{H^1}^2\geq C\|f\|_{L^2}^2\,.
$$
A direct computation shows that
$$
B_{0}^{1}(f,f)=c^{-\frac{1}{2}}B_{0}^{c}(g,g),\quad \|f\|_{L^2}=c^{\frac{1}{4}}\|g\|_{L^2}\,.
$$
Thus 
\begin{equation}\label{ed}
B_{0}^{c}(g,g)\gtrsim c\|g\|_{L^2}^2\,.
\end{equation}
On the other hand, coming back to the definition of $B_{0}^c(g,g)$, we obtain that
\begin{equation}\label{dv}
B_{0}^c(g,g)\geq 2\|g'\|_{L^2}^2-4c\|g\|_{L^2}^2.
\end{equation}
The claim of Lemma~\ref{lem1} is now a direct consequence of \eqref{ed} and \eqref{dv}.
\end{proof}
The next step will be to get a bound from below  for  $B_{k}^c(f,f)$ when $k \neq 0$. This is
 the key point of the proof of Theorem \ref{stability}. 
 We shall first state a robust result  for small enough speeds which can be easily transfered to the gKP-I equation.
\begin{lem}\label{lem2}
There exists $C>0$ such that for every $c<1$, every $k\in \Z^{\star}$ and every $f \in H^1$ such that  $\partial_{x}^{-1}f\in L^2$,
we have the estimate
$$
B_k^c(f,f)\geq
C(1-c)
\Big(\|f\|_{H^1}^2+k^2\|\partial_x^{-1}f\|_{L^2}^2\Big).
$$
\end{lem}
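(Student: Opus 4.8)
The plan is to reduce the estimate to one elementary inequality on the Fourier side together with the trivial pointwise bound on the potential. Writing
$$
B_k^c(f,f)=2\int_{-\infty}^{\infty}\big[(f')^2+k^2(\partial_x^{-1}f)^2+(c-Q_c)f^2\big]\,dx,
$$
the only dangerous contribution is $-Q_cf^2$. Since $Q(x)=3\,{\rm ch}^{-2}(x/2)\le 3$ we have the uniform bound $0\le Q_c\le 3c$, hence $c-Q_c\ge -2c$ pointwise. Everything then comes down to controlling $\|f\|_{L^2}^2$ by the two positive quadratic terms.

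The key point (as the authors announce) is the observation that, by Plancherel and the elementary pointwise inequality $\xi^2+\xi^{-2}\ge 2$ valid for every $\xi\neq 0$,
$$
\|f'\|_{L^2}^2+\|\partial_x^{-1}f\|_{L^2}^2=\int(\xi^2+\xi^{-2})|\hat f(\xi)|^2\,d\xi\ge 2\int|\hat f(\xi)|^2\,d\xi=2\|f\|_{L^2}^2.
$$
Because $k\in\Z^\star$ forces $k^2\ge 1$, this at once upgrades to the interpolation bound
$$
\|f\|_{L^2}^2\le\tfrac12\big(\|f'\|_{L^2}^2+k^2\|\partial_x^{-1}f\|_{L^2}^2\big).
$$

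With these two ingredients the conclusion is a one-line computation that I would carry out as follows. Combining the pointwise bound $c-Q_c\ge-2c$ with the interpolation inequality gives
$$
\tfrac12 B_k^c(f,f)\ge \|f'\|_{L^2}^2+k^2\|\partial_x^{-1}f\|_{L^2}^2-2c\|f\|_{L^2}^2\ge(1-c)\big(\|f'\|_{L^2}^2+k^2\|\partial_x^{-1}f\|_{L^2}^2\big),
$$
where positivity of the factor $1-c$ is exactly where the hypothesis $c<1$ enters. Using the same interpolation inequality once more to bound the full norm on the right-hand side of the Lemma,
$$
\|f\|_{H^1}^2+k^2\|\partial_x^{-1}f\|_{L^2}^2\le \tfrac32\big(\|f'\|_{L^2}^2+k^2\|\partial_x^{-1}f\|_{L^2}^2\big),
$$
I obtain $B_k^c(f,f)\ge \tfrac43(1-c)\big(\|f\|_{H^1}^2+k^2\|\partial_x^{-1}f\|_{L^2}^2\big)$, which is the claim with $C=4/3$.

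I do not expect a genuine obstacle: once the inequality $\xi^2+\xi^{-2}\ge 2$ is isolated, the rest is bookkeeping of constants. The only point deserving care is that $\partial_x^{-1}f$ is defined through the multiplier $(i\xi)^{-1}$, so all the Fourier manipulations are legitimate precisely under the standing hypothesis $\partial_x^{-1}f\in L^2$ (equivalently $|\xi|^{-1}\hat f\in L^2$), which is what makes $\int\xi^{-2}|\hat f|^2\,d\xi$ finite. Finally, I note that the argument uses nothing about the soliton beyond the uniform bound $Q_c\le 3c$; this is the source of the robustness advertised before the statement, since for the gKP-I equation the coefficient $Q_c$ is replaced by $R_{c,p}^p=c\,R_p(\sqrt c\,x)^p\le c\,\|R_p\|_{L^\infty}^p$, again a constant multiple of $c$, so the identical computation applies.
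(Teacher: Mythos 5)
Your proof is correct and follows essentially the same route as the paper's: the uniform bound $Q_c\le 3c$, the reduction $k^2\ge 1$, and the Fourier-side inequality $\xi^2+\xi^{-2}\ge 2$ are exactly the ingredients the authors use. The only (harmless) difference is organizational — you absorb the $-2c\|f\|_{L^2}^2$ term directly into the positive part via the interpolation bound, whereas the paper first extracts $(1-c)\|f\|_{L^2}^2\lesssim B_k^c(f,f)$ and then combines the two lower bounds; your bookkeeping also yields the explicit constant $C=4/3$.
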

\begin{proof}
Since ${\rm ch}(x)\geq 1$ for every $x\in\R$, we can write
\begin{eqnarray*}
B_k^c(f,f)  & \geq & 2\int_{-\infty}^{\infty}\Big[(f'(x))^2+(\partial_{x}^{-1}f(x))^2+c(f(x))^2-3c(f(x))^2\Big]dx
\\
& \gtrsim &
\int_{-\infty}^{\infty}|\hat{f}(\xi)|^2(|\xi|^2+|\xi|^{-2}-2c)d\xi
\\
& \gtrsim &
(1-c)\|f\|_{L^2}^2\,. 
\end{eqnarray*}
On the other hand coming back to the definition of $B_k^c(f,f)$, we obtain 
\begin{equation*}
B_k^c(f,f)   \geq  2\|f'\|_{L^2}^2+2k^2\|\partial_{x}^{-1}f\|_{L^2}^2-4c\|f\|_{L^2}^2.
\end{equation*}
Thus
$$
\|f'\|_{L^2}^2+k^2\|\partial_{x}^{-1}f\|_{L^2}^2\lesssim 
B_k^c(f,f)+\frac{1}{1-c} B_k^c(f,f)\lesssim  \frac{1}{1-c}B_k^c(f,f). 
$$
This completes the proof of Lemma~\ref{lem2}.
\end{proof}
 In the case of the KP-I equation, we can get a more precise statement:
\begin{lem}\label{lem2bis}
There exists $C>0$ such that for every $c<4/\sqrt{3}$, every $k\in \Z^{\star}$ and every $f \in H^1$ such that $\partial_{x}^{-1}f\in L^2$, we have the estimate
$$
B_k^c(f,f)\geq
C
\Big(\|f\|_{H^1}^2+k^2\|\partial_x^{-1}f\|_{L^2}^2\Big).
$$
\end{lem}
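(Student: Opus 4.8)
The plan is to remove the transverse frequency by a scaling and then to recognise $c=4/\sqrt3$ as the exact speed at which a zero mode of a one–dimensional self–adjoint problem appears. First I would reduce to $k^2=1$: since
$$
B_k^c(f,f)=B_1^c(f,f)+2(k^2-1)\|\partial_x^{-1}f\|_{L^2}^2,
$$
once the bound holds at $k=1$ with a constant $C\le 2$ it holds for all $k$, because $C+2(k^2-1)\ge Ck^2$; the case $k=1$ is therefore the only binding one, exactly as in the reduction used for Lemma~\ref{lem2}. Rescaling by $f(x)=h(\sqrt c\,x)$ then turns $B_1^c(f,f)$ into a positive multiple of
$$
\mathcal F_\mu(h):=\langle Lh,h\rangle+\mu\,\|\partial_y^{-1}h\|_{L^2}^2,\qquad L:=-\partial_y^2+1-Q,\quad Q=3\,\mathrm{ch}^{-2}(y/2),
$$
with $\mu=1/c^2$, taken over $h\in H^1$ with $\partial_y^{-1}h\in L^2$ (equivalently $\int h=0$). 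Since $c<4/\sqrt3\iff\mu>3/16$, the lemma is equivalent to the coercivity of $\mathcal F_\mu$ for $\mu>3/16$.

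Next I would record the spectrum of $L$. With $z=y/2$ one has $4L=-\partial_z^2+4-12\,\mathrm{ch}^{-2}z$, a Pöschl–Teller operator with $12=3\cdot4$, reflectionless, whose potential $-\partial_z^2-12\,\mathrm{ch}^{-2}z$ has bound states at $-9,-4,-1$. Hence $L$ has a single simple negative eigenvalue $-5/4$, with even nodeless ground state $\chi=\mathrm{ch}^{-3}(y/2)$, a kernel spanned by the odd function $Q'$, one more eigenvalue $3/4$, and essential spectrum $[1,\infty)$; in particular $L$ is positive with a gap on $\{\chi,Q'\}^\perp$, which is Lemma~\ref{lem1} after scaling.

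The crux is to show that $3/16$ is sharp. I would test with $v=\mathrm{ch}^{-1}(y/2)$ and $h_*:=\partial_y^2 v$ (so $\partial_y^{-2}h_*=v$ and $\int h_*=0$): a direct computation, in which the $\mathrm{ch}^{-3}$ and $\mathrm{ch}^{-5}$ contributions cancel, gives
$$
\big(L-\mu\,\partial_y^{-2}\big)h_*=\big(\tfrac3{16}-\mu\big)\,\mathrm{ch}^{-1}(y/2),\qquad\text{hence}\qquad \mathcal F_\mu(h_*)=\tfrac13\big(\mu-\tfrac3{16}\big).
$$
Thus $h_*$ is a zero mode exactly at $\mu=3/16$, it makes $\mathcal F_\mu$ negative for $\mu<3/16$ (the instability side, matching Theorem~\ref{zak}), and it is the direction that degenerates as $c\uparrow4/\sqrt3$. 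Granting the matching lower bound $A_{3/16}:=L-\tfrac3{16}\partial_y^{-2}\ge0$ on the mean–zero space, with kernel exactly $\R h_*$, the identity
$$
\mathcal F_\mu(h)=\mathcal F_{3/16}(h)+\big(\mu-\tfrac3{16}\big)\,\|\partial_y^{-1}h\|_{L^2}^2
$$
gives $\mathcal F_\mu(h)\ge(\mu-\tfrac3{16})\|\partial_y^{-1}h\|_{L^2}^2>0$ for $\mu>3/16$ and $h\neq0$ (both terms vanish only if $h\parallel h_*$, where $\partial_y^{-1}h\neq0$), and a standard spectral–gap argument upgrades this to the stated coercivity for each fixed $c<4/\sqrt3$.

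The main obstacle is precisely this sharp nonnegativity $A_{3/16}\ge0$, and the point is that no one–direction estimate reaches $3/16$: using $\langle Lh,h\rangle\ge-\tfrac54\langle h,\chi\rangle^2/\|\chi\|^2$ together with $|\langle h,\chi\rangle|=|\langle\partial_y^{-1}h,\chi'\rangle|\le\|\chi'\|\,\|\partial_y^{-1}h\|$ only proves positivity for $\mu>\tfrac54\|\chi'\|^2/\|\chi\|^2=45/112>3/16$. To close the gap I would argue by continuation in $\mu$: let $\lambda(\mu)$ be the bottom of the (purely discrete, below $1+2\sqrt\mu$) spectrum of $A_\mu$ on the mean–zero space. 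Then $\lambda$ is continuous and nondecreasing, it is strictly positive for $\mu>45/112$ by the crude bound, and it can reach $0$ only at a genuine eigenvalue, i.e. at a $\mu$ for which $A_\mu$ has a decaying zero mode. Writing the zero–mode equation in the variable $w=\partial_y^{-1}h\in H^2$ turns it into the elementary fourth–order ODE $\mathcal M_\mu w:=w''''-w''+(Qw')'+\mu w=0$, whose solutions are explicit because $Q$ is reflectionless and which admits the decaying solution $w_*=\mathrm{ch}^{-2}(y/2)\,\mathrm{sh}(y/2)$ precisely at $\mu=3/16$. The remaining, genuinely technical step is to verify from this ODE that $3/16$ is the largest such $\mu$; combined with monotonicity of $\lambda$ this forces $\lambda(\mu)>0$ for all $\mu>3/16$, which is the desired bound.
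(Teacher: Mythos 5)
Your reduction to $k=1$, the rescaling to $\mathcal{F}_\mu$ with $\mu=1/c^2$, the spectral data of $L$, and the explicit computation $(L-\mu\,\partial_y^{-2})h_*=(\tfrac{3}{16}-\mu)\,\mathrm{ch}^{-1}(y/2)$ with $\mathcal{F}_\mu(h_*)=\tfrac13(\mu-\tfrac3{16})$ are all correct (I checked the cancellation of the $\mathrm{ch}^{-3}$ and $\mathrm{ch}^{-5}$ contributions and the value $\|v'\|_{L^2}^2=1/3$), and the continuation-in-$\mu$ framework is sound in principle. But the step you defer as ``genuinely technical'' --- that $\mu=3/16$ is the largest $\mu$ for which the fourth-order ODE admits a nontrivial decaying solution --- is not a loose end: it is the entire content of the lemma, and without it the continuation argument does not close. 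Your crude one-direction bound only covers $\mu>45/112$, so the whole interval $(3/16,45/112]$ is unaccounted for. As written, the proof therefore has a genuine gap at its decisive point.

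The paper closes exactly this gap. It studies the eigenvalue problem $M_cu=\lambda u$, $\lambda\le 0$, for $M_cu=u^{(4)}-((c-Q_c)u')'+u$ (the same quadratic form, written in the variable $u=\partial_x^{-1}f$), rescales it to $w^{(4)}-4(1-3\,\mathrm{ch}^{-2}y)w''+3\nu^2w=0$ with $3\nu^2=\frac{16}{c^2}(1-\lambda)$, and invokes the explicit Alexander--Pego--Sachs solutions $g_\mu(y)=e^{\mu y}(\mu^3+2\mu-3\mu^2\tanh y)$, where $\mu$ runs over the roots of $\mu^4-4\mu^2+3\nu^2=0$: the solutions decaying at $-\infty$ are spanned by $g_{\mu_1},g_{\mu_2}$ with $\mathrm{Re}\,\mu_i>0$, and decay at $+\infty$ forces $\mu_i^3+2\mu_i-3\mu_i^2=0$, hence $\mu_i=1$ and $\nu^2=1$, i.e. $c^2=\frac{16}{3}(1-\lambda)$. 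Thus nonpositive eigenvalues exist only for $c\ge 4/\sqrt3$; at $\lambda=0$ this is precisely your missing statement (the only decaying zero mode occurs at $\mu=1/c^2=3/16$), and it moreover disposes of all $\lambda\le 0$ at once, so no continuation argument is needed. To complete your write-up you would insert this explicit-solution analysis (your remark that $Q$ is reflectionless is the right instinct --- the $g_\mu$ are exactly the explicit solutions you anticipated), and you should also treat the double-root case $\nu^2=4/3$ separately, as the paper does using the basis $(g_\mu,\partial g_\mu/\partial\mu)$.
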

An analogous statement holds for the gKP-I equation  see Remark \ref{optim} below.
\begin{proof}
Again, we can use that 
$$ B_k^c(f,f)   \geq  2\int_{-\infty}^{\infty}\Big[(f'(x))^2+(\partial_{x}^{-1}f(x))^2+c(f(x))^2-Q_c(f(x))^2\Big]dx.$$
Next, let us set $u=\partial_{x}^{-1}f, $ then we have
\beq
\label{Bkb} B_k^c(f,f)   \geq 2 \mathcal{Q}(u, u), \quad   u=\partial_{x}^{-1}f,
\eeq
where 
$$\mathcal{Q}(u, u)= \int_{-\infty}^{\infty}\Big[(u''(x))^2+(u(x))^2+c(u'(x))^2-Q_{c}(u'(x))^2\Big]dx$$
and we note that $\mathcal{Q}$ is the symmetric form associated to the operator
$$ M_{c}u= u^{(4)} -  \big( (c - Q_{c})u'\big)' + u.$$
Note that $M_{c}$ has a self-adjoint realization on $L^2$ with domain $H^4$.
Since $Q_{c}$ tends to zero exponentially fast at $\pm \infty$, we get 
 that $M_{c}$ is a compact perturbation of the operator
  $ \partial_{x}^4 - c \partial_{x}^2 + 1$ and by using the Weyl Lemma, we   get  that  the  essential spectrum
 of $M_{c}$ is   in  $[1, + \infty[$. To get a  useful bound from below for $\mathcal{Q}$, we thus  only need to  prove that $M_{c}$
  has no nonpositive  eigenvalue. Therefore,   let us  study the eigenvalue problem
  $$ M_{c}u = \lambda u, \quad \lambda \leq 0, \quad u \in H^4.$$
  Note that a nontrivial solution $u$ of this problem is actually in $H^\infty$ and that
  one can write $u= v'$ with $v \in H^\infty$. The problem for $v$ reads
  $$ v^{(4)} - (c-Q_{c}) v'' + v= \lambda v.$$
  Motivated by the expression \eqref{soliton} for $Q_{c}$, we set  
  $v(x) = w\big({ x\, \sqrt{c} \over 2}\big)$
  and we get for $w(y)$ the eigenvalue problem
  \beq
  \label{eigw}
  w^{(4)} - 4\big( 1- {3 \over \mbox{ch}^2\, y}\big)w'' + 3 \nu^2 w = 0.
  \eeq
  where $\nu >0$ is defined by 
  \beq
  \label{nudef}
  3\nu^2= {16\over c^2}(1-\lambda).
  \eeq
Consequently, the equation \eqref{eigw} has the same form as the eigenvalue problem studied in \cite{Pego} (see also the Appendix
 of \cite{RT1}). It corresponds to the case $\lambda =0$ in the notation of \cite{Pego}. This case was not studied
  in \cite{Pego}, nevertheless the same kind of arguments can be used to analyze it.
 
 Since $1/\mbox{ch}^2\, y$ tends to zero exponentially fast, the solutions of \eqref{eigw} behave at infinity
  like  the solutions of the constant coefficient equations
 $$ w^{(4)} - 4 w'' + 3 \nu^2w= 0.$$
 The characteristic values of this linear equation are roots of
 \beq
 \label{charac} \mu^4 - 4 \mu^2 + 3 \nu^2= 0.\eeq
 Consequently, we get that for $\nu >0$ there are two roots of positive real part and two roots of negative real parts.
 Moreover, for $\nu^2 \neq 4/3$  the roots are simple.
 Next, by using \cite{Pego}, we get that for  every characteristic value $\mu$, 
 $$ g_{\mu}(y)= e^{\mu \,y} \big( \mu^3 + 2 \mu - 3 \mu^2 \mbox{tanh }y\big)$$
 is an exact solution of \eqref{eigw}. 
   This yields that  for $\nu^2 \neq 4/3$, $(g_{\mu_{1}}, g_{\mu_{2}})$ with $\mu_{1}$, $\mu_{2}$ the two characteristic values of positive real parts
    is a basis of the solutions of \eqref{eigw} which tend to zero when $y$ tends to $-\infty$. Consequently, there is
      a nontrivial  $L^2$ solution of \eqref{eigw} if and only if there exists a nontrivial linear combination of 
       $g_{\mu_{1}}, g_{\mu_{2}}$ which tends to zero when $y $ tends to $+\infty$.
        If such a linear combination exists, since $\mu_{1} \neq \mu_{2}$ this implies that
         $$\lim_{y \rightarrow +\infty} e^{-\mu_{i} \, y}g_{\mu_{i}}(y)= 0, \quad i=1, \, 2.$$
         This yields $\mu_{i}^3 + 2 \mu_{i} -3 \mu_{i}^2=0$, i.e. $\mu_{i}= 0, \, 1,$ or $2$.
         Obviously, for $\nu>0$,   $0$ and $2$ are not solutions of \eqref{charac} while
          for $\mu_{i}= 1$, we obtain
          $\nu ^2= 1$ and hence by using \eqref{nudef} we obtain
          $$ c^2= {16 \over 3}(1-\lambda).$$
          Consequently, there is a nontrival solution with $\lambda \leq 0$ if and only if
           $c^2 \geq {16 \over 3}.$
        As in \cite{Pego},  we get the same result in the case that  $\mu$ is a  double root by using  $(g_{\mu},{ \partial g \over \partial \mu})$
        as a basis of solutions.
        
     We have thus proven that for $c<4/\sqrt{3}$, the eigenvalues of $M_{c}$ below the essential spectrum are positive, 
      this yields that there exists $\alpha >0$ such that
      $$ \mathcal{Q}(u,u)=  (M_{c}u, u) \geq \alpha \|u\|_{L^2}^2, \quad \forall u \in H^4.$$ 
      Consequently, thanks to \eqref{Bkb}, we get that
     $$B_{k}^c(f,f) \geq 2 \alpha \|\partial_{x}^{-1} f \|_{L^2}^2.$$
     On the other hand,  the  bound from below for $B_{k}^c$ used in the proof of Lemma \ref{lem2} yields
     $$ B_{k}^c(f,f) \geq  2 \int_{-\infty}^{+\infty} \Big( |\xi|^2 + {1 \over |\xi|^2} - 2 c \Big) |\hat{f}(\xi)|^2.$$
     Consequenty, by combining the two estimates we get that  for every $A>0$, there exists $C(A)>0$
      such that
      $$ C(A) B_{k}^c(f,f) \geq \int_{-\infty}^{+\infty} \Big( |\xi|^2 + {1 + A \over |\xi|^2} - 2 c \Big) |\hat{f}(\xi)|^2
       \geq \int_{-\infty}^{+\infty}  \Big(  2 \sqrt{1+ A}   -2 c  \Big)
        |\hat{f}(\xi)|^2.$$
        By choosing $A$  such that $\sqrt{1+ A}   >c$ we thus get that
        $$ B_{k}^c(f,f) \gtrsim \|f\|_{L^2}^2$$
        and we end the proof as in the proof of Lemma \ref{lem2}. 
     \end{proof}

Let us now come back to the proof of Theorem~\ref{stability}. We first  use the implicit function theorem to the map
$F:Z^1\times\R\rightarrow \R$, defined by
$$
F(u,\beta)=\int_{-\infty}^{\infty}\int_{0}^{2\pi}u(x+\beta,y)Q_c'(x)dxdy.
$$
Since
$$
F(Q_c,0)=0,\quad \frac{\partial F}{\partial\beta}(Q_c,0)=2\pi\|Q_c'\|_{L^2}^2\neq 0,
$$
we obtain that if the initial data is close to $Q_c$ in $Z^1$ then there exists a modulation parameter $\gamma(t)$, defined at least for small times, so that
$$
v(t,x+\gamma(t),y)=Q_c(x)+w(t,x,y)
$$
with
\begin{equation}\label{OC}
\int_{-\infty}^{\infty}\int_{0}^{2\pi}w(t,x,y)Q_c'(x)dxdy=0.
\end{equation}
Recall the conservation law
$$
H(v(t))=H(v(0))=H(Q_c+w(0)),
$$
where $w(0)$ is small in $Z^1$.
On the other hand
$$
H(v(t))=H(Q_c+w(t))=H(Q_c)+B^c(w(t),w(t))-\frac{1}{2}\int_{-\infty}^{\infty}\int_{0}^{2\pi}w^3(t,x,y) dxdy.
$$
By the anisotropic Sobolev inequality 
\begin{equation}\label{sobolev}
\|u\|_{L^p}\leq C\|u\|_{L^2}^{\frac{6-p}{2p}}\|\partial_x u\|_{L^2}^{\frac{p-2}{p}}
\|\partial_x^{-1}\partial_y u\|_{L^2}^{\frac{p-2}{2p}}\,,\quad 2\leq p\leq 6,
\end{equation}
we infer that
$$
\Big|\int_{-\infty}^{\infty}\int_{0}^{2\pi}w^3(t,x,y) dxdy\Big|\lesssim \|w(t)\|_{Z^1}^3\,.
$$
For the proof of \eqref{sobolev}, we refer to \cite{BIN} or \cite{MST07} (Lemma~2, page~783). 

Next, we can write
$$
B^c(w(t),w(t))=
B^c_{0}(\hat{w}(t,\cdot,0),\hat{w}(t,\cdot,0))+\sum_{k\in\Z^{\star}}B^c_{k}(\hat{w}(t,\cdot,k),\hat{w}(t,\cdot,k)),
$$
where here we use the notation
$$
\hat{w}(t,x,k)=(2\pi)^{-1}\int_{0}^{2\pi}e^{-iky}w(t,x,y)dy,
$$
for  the partial Fourier transform of $w$ with respect to the periodic variable $y$.
Let us observe that
$$
w(t,x,y)=\hat{w}(t,x,0)+\sum_{k\in\Z^{\star}}e^{iky}\hat{w}(t,x,k)
$$
and that  (the  time $t$  being  a passive parameter)
$$
\|w(t)\|_{Z^1}^2\approx
\|\hat{w}(t,\cdot,0)\|_{H^1(\R)}^2+
\sum_{k\in\Z^{\star}}
\Big(\|\hat{w}(t,\cdot,k)\|_{H^1(\R)}^2+k^2\|\partial_{x}^{-1}\hat{w}(t,\cdot,k)\|_{L^2(\R)}^2\Big).
$$

By the orthogonality condition \eqref{OC},  we obtain that 
\begin{equation}\label{OCbis}
\int_{-\infty}^{\infty}\hat{w}(t,x,0)Q_c'(x)dx=0,
\end{equation}
since for $k\neq 0$,
$$
\int_{-\infty}^{\infty}\int_{0}^{2\pi}e^{iky}\hat{w}(t,x,k)Q_c'(x)dxdy=0.
$$
Next, let us  assume  that the initial perturbation is such that $\|v(0,\cdot)\|_{L^2(\R\times S^1)}=\|Q_c\|_{L^2(\R\times S^1)}$, then 
 thanks to the conservation of the $L^2$ norm, we get
\begin{equation}\label{rigid}
\|v(t,\cdot)\|_{L^2(\R\times S^1)}=\|v(0,\cdot)\|_{L^2(\R\times S^1)}=\|Q_c\|_{L^2(\R\times S^1)}\,.
\end{equation}
Write
$$
w(t,x,y)=\alpha Q_c(x)+w_1(t,x,y),
$$
where
$$
\int_{-\infty}^{\infty}\int_{0}^{2\pi} Q_c(x)w_1(t,x,y)dxdy=0.
$$
Recall that $v(t,x+\gamma(t),y)=Q_c(x)+w(t,x,y)$ and using \eqref{rigid}, we get
$$
|(Q_c,w(t,x,y))|\leq \frac{1}{2}\|w(t)\|_{L^2(\R\times S^1)}^2,
$$
where $(\cdot,\cdot)$ denotes the $L^2(\R\times S^1)$ scalar product.
On the other hand
$$
(Q_c,w(t,x,y))=\alpha \|Q_c\|_{L^2(\R\times S^1)}^2\,.
$$
Thus 
\begin{equation}\label{zvezda}
|\alpha|\lesssim \|w(t)\|_{L^2(\R\times S^1)}^2\,.
\end{equation}
We now write
$$
B^c(w(t),w(t))=
B^c_{0}(\alpha Q_c+\widehat{w_1}(t,\cdot,0),\alpha Q_c+\widehat{w_1}(t,\cdot,0))+
\sum_{k\in\Z^{\star}}B^c_{k}(\widehat{w_1}(t,\cdot,k),\widehat{w_1}(t,\cdot,k))\,.
$$
Using  Lemma \ref{lem2bis}, for $c<4/\sqrt{3}$ and $k\in \Z^{\star}$,
$$
B^c_{k}(\widehat{w_1}(t,\cdot,k),\widehat{w_1}(t,\cdot,k))\gtrsim
\|\widehat{w_1}(t,\cdot,k)\|_{H^1(\R)}^2+k^2\|\partial_{x}^{-1}\widehat{w_1}(t,\cdot,k)\|_{L^2(\R)}^2.
$$
Next, we expand
\begin{multline*}
B^c_{0}(\alpha Q_c+\widehat{w_1}(t,\cdot,0),\alpha Q_c+\widehat{w_1}(t,\cdot,0))=
\\
B^c_{0}(\widehat{w_1}(t,\cdot,0),\widehat{w_1}(t,\cdot,0))
+
2\alpha B^c_{0}(Q_c,\widehat{w_1}(t,\cdot,0))
+
\alpha^2 B^c_{0}(Q_c,Q_c).
\end{multline*}
Since thanks to our orthogonality conditions we can write
$$
\int_{-\infty}^{\infty}\widehat{w_1}(t,x,0)Q_c(x)dx=\int_{-\infty}^{\infty}\widehat{w_1}(t,x,0)Q_c'(x)dx=0,
$$
we can apply Lemma~\ref{lem1} and get
$$
B^c_{0}(\widehat{w_1}(t,\cdot,0),\widehat{w_1}(t,\cdot,0))\gtrsim \|\widehat{w_1}(t,\cdot,0)\|_{H^1(\R)}^2\,.
$$
On the other hand thanks to \eqref{zvezda}
$$
\alpha^2 B^c_{0}(Q_c,Q_c)\lesssim \|w(t)\|_{L^2(\R\times S^1)}^4\lesssim \|w(t)\|_{Z^1}^4\,.
$$
Next, we can write
$$
\|\widehat{w_1}(t,\cdot,0)\|_{L^2(\R)}\lesssim 
\Big\|\|w_1(t,x,y)\|_{L^2_y}\Big\|_{L^2_x}=\|w_1(t)\|_{L^2(\R\times S^1)}\leq \|w(t)\|_{L^2(\R\times S^1)}\,.
$$
Thus, by invoking again \eqref{zvezda}, we get
$$
|2\alpha B^c_{0}(Q_c,\widehat{w_1}(t,\cdot,0))|\lesssim \|w(t)\|_{Z^1}^3\,.
$$
In summary, we get the bound
\begin{eqnarray*}
B^c(w(t),w(t)) & \gtrsim & \|w_1(t)\|_{Z^1}^2-C\Big(\|w(t)\|_{Z^1}^3+\|w(t)\|_{Z^1}^4\Big)
\\
& \gtrsim &
\|w(t)\|_{Z^1}^2-C\Big(|\alpha|^2+\|w(t)\|_{Z^1}^3+\|w(t)\|_{Z^1}^4\Big)
\\
& \gtrsim &
\|w(t)\|_{Z^1}^2-C\Big(\|w(t)\|_{Z^1}^3+\|w(t)\|_{Z^1}^4\Big).
\end{eqnarray*}
Therefore, we arrive at the bound
$$
\|w(t)\|_{Z^1}^2\lesssim |H(Q_c+w(0))-H(Q_c)|+\|w(t)\|_{Z^1}^3+\|w(t)\|_{Z^1}^4\,.
$$
Thus by a bootstrap argument, we get the stability statement for initial perturbations such that $\|v(0)\|_{L^2}=\|Q_c\|_{L^2}$.

 To get the stability for general perturbations, we use the classical scaling argument.  We note  thanks to \eqref{soliton} that
 $\|Q_c\|_{L^2} = c^{3\over 4}$. Consequently  if $\|v(0) -Q_{c}\|_{L^2}$ is small for some $c<4/\sqrt{3}$, then we can find
  some $\tilde{c}$ close to $c$ (and thus smaller than $4/\sqrt{3}$) such that   $ \|v(0)\|_{L^2}=\|Q_{\tilde{c}}\|_{L^2}$. By the stability property
   already estabished, we get that
   $$ \sup_{t} \inf_{a} \|u(t,x-a, y) - Q_{\tilde c}(x-\tilde c t) \|_{Z^1}\leq \eps$$ 
    and since
   $$\sup_{t} \inf_{a} \|Q_{c}(x-ct - a ) - Q_{\tilde c}(x-\tilde c t) \|_{Z^1}= \|Q_{c} -  Q_{\tilde{c}} \|_{H^1}$$
   is small when $c$ is close to $\tilde c$ the result follows.
This ends the proof of Theorem~\ref{stability}.
\section{Proof of the instability theorem}
In this section, we prove Theorem~\ref{sofia}.
We first perform a scaling argument which allows to  reduce the matters to a fixed speed with varying period and therefore we will enter 
in the framework of our previous works \cite{RT1,RT2}.

Consider thus the equation
\begin{equation}\label{gKPIbis}
\partial_t u+
u^p\partial_x u+\partial_x^3 u-\partial_x^{-1}\partial_y^2u=0
\end{equation}
with a particular solution $R_c(x-ct)=c^{\frac{1}{p}}R(c^{\frac{1}{2}}(x-ct))$ (we shall omit the index $p$ in $R_{c,p}$ and $R_{p}$
 throughout the proof for  the sake of clarity).

We now observe that for $\lambda>0$, if $u(t,x,y)$ is a solution of \eqref{gKPIbis} then so is
$$
u_{\lambda}(t,x,y)=\lambda^{\frac{2}{p}}u(\lambda^3t,\lambda x,\lambda^2 y)\,.
$$

Suppose now that for some $L>0$ we have a solution of \eqref{gKPIbis} for $x\in\R$, 
$y\in \R\slash (2\pi L\Z)$ such that
\begin{equation}\label{(1)}
\lim_{t\rightarrow\infty}\|u(t,x,y)-R(x-t)\|_{(H^s\cap Z^1)(\R\times \R\slash (2\pi L\Z)}=0
\end{equation}
and
\begin{equation}\label{(2)}
\partial_{y}u(0,x,y)\neq 0.
\end{equation}
Set
$$
\widetilde{u}(t,x,y)\equiv
L^{\frac{1}{p}}u(L^{\frac{3}{2}}t, L^{\frac{1}{2}}x, Ly)\,.
$$
Then $\widetilde{u}(t,x,y)$ is a solution of \eqref{gKPIbis} which is $2\pi$ periodic in $y$. Moreover, a direct computation shows that
for a suitable constant $C(L)$,
\begin{multline*}
\|\tilde{u}(t,x,y)-L^{\frac{1}{p}}R(L^{\frac{1}{2}}(x-Lt))\|_{(H^s_{L}\cap Z^1_{L})(\R_{x}\times \R_{y}\slash (2\pi \Z))}
\\
=C(L)\|u(L^{\frac{3}{2}}t,x,y)-R(x-L^{\frac{3}{2}}t)\|_{(H^s\cap Z^1)(\R_{x}\times \R_{y}\slash (2\pi L \Z))}\,
\end{multline*}
where  $H^s_{L}(\mathbb{R}\times\R_{y}\slash (2\pi  \Z)) $  and $Z^1_{L}(\mathbb{R}\times\R_{y}\slash (2\pi  \Z)) $ are the standard spaces equipped with  the equivalent  norms
$$ \|u\|_{H^s_{L}} = \|  (1+ |\xi/L^{1\over 2}|^s + |k|^s)\hat{u}(\xi,k)\|_{L^2(\mathbb{R}_{\xi}\times \mathbb{Z}_{k})}, \quad
 \|u\|_{Z^s_{L}} = \|  (1+ |\xi/L^{1\over 2}|^s + | (\xi/L^{1\over 2 })^{-1}k|^s)\hat{u}(\xi,k)\|_{L^2(\mathbb{R}_{\xi}\times \mathbb{Z}_{k})}$$
Therefore $\widetilde{u}(t,x,y)$ satisfies the conclusion of Theorem~\ref{sofia}.

 We shall take $L=c$ in the following in order to study the stability of the solitary wave  with speed one submitted
  to $L$ periodic transverse perturbations.
We thus  restrict our attention to the construction of a solution $u$ of \eqref{gKPIbis} satisfying \eqref{(1)} and \eqref{(2)}.

We shall need to construct a dynamics asymptotically close to $R$ under the flow of the equation
\begin{equation}\label{star}
\partial_{t}u +Au+\frac{1}{p+1}\partial_x(u^{p+1})=0,
\end{equation}
posed on $\R_{x}\times \R_{y}\slash (2\pi L \Z)$ for a suitable value of $L$, where $A$ is defined as follows
$$
A\equiv \partial_{x}^3-\partial_x-\partial_{x}^{-1}\partial_{y}^2\,.
$$
Following \cite{Grenier} and our previous works \cite{RT1,RT2}, we look for a solution of \eqref{star} under the form 
\begin{equation}\label{anz}
u(t)=u_{ap}(t)+v(t),\quad t\geq 0,
\end{equation}
with
$$
u_{ap}(t)=\sum_{k=0}^M \delta^k u_{k}(t),
$$
where
 $|\delta | \ll 1$, $M\gg 1$ and $u_k(t)$ are defined iteratively starting from $u_0=R$.
The second term is defined as
\beq
\label{u1}
u_1(t)=e^{-\sigma t}\varphi,
\eeq
where
$\varphi(x,y)=e^{i\frac{n_{0}y}{L}}\psi(x)$  for some $n_{0} \neq 0$ with $\psi\in \cap_{s}H^{s}(\R)$ is such that
\begin{equation}\label{eigen}
{\mathcal A}\varphi=-\sigma\varphi,\quad \sigma>0,
\end{equation}
where ${\mathcal A}w=Aw+\partial_x(R^p w)$ is the linearized operator around $R$
 and $\sigma$ is  the largest  value for which the above equation has a nontrivial solution.
 Note that   if   $u(t,x,y)$ solves
 \beq
 \label{alin}\partial_{t} u + \mathcal{A} u =0
 \eeq
 then $v(t,x,y)= u(-t, -x, -y)$ also solves this equation. This implies that  if $\lambda$ is an eigenvalue of $\mathcal{A}$
  so is $\mathcal{-\lambda}$. Consequently the $\sigma$ we are looking for is exactly the one for which \eqref{alin}
  has an unstable eigenmode  with maximal growth rate under the form $e^{\sigma t } e^{i\frac{n_{0}y}{L}}\psi(x)$.
 In  \cite{RT1}, for $p=1$,  by using \cite{Pego},  we have shown that such a solution exists  for $L>4/\sqrt{3}$. In the case $p=2,3$
the existence of an interval of $L's$ such that the problem corresponding to \eqref{eigen}  has a nontrivial solution, is shown in \cite{RT2}
 \cite{RT4}.
We can actually sharpen this result as follows.
\begin{lem}
\label{lem+}
There exists $L^\star >0$ such that for every $L>L^*$, there exists a solution of  \eqref{eigen} under the form
$\varphi(x,y)=e^{i \frac{y}{L}}\psi(x)$   with $\psi\in \cap_{s}H^{s}(\R)$ and
 $\partial_{x}^{-1} \psi \in L^2(\R)$. 
\end{lem}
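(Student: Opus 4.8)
The plan is to separate the transverse variable, reduce \eqref{eigen} to a one–dimensional spectral problem depending on the single parameter $k=1/L$, and then detect the unstable mode through a monotone family of self–adjoint operators. First I would insert the ansatz $\varphi(x,y)=e^{iy/L}\psi(x)$ (that is $n_{0}=1$) into ${\mathcal A}\varphi=-\sigma\varphi$. Since $\partial_y^2\varphi=-L^{-2}\varphi$, the operator ${\mathcal A}$ acts on this mode as ${\mathcal A}_k\psi=\psi'''-\psi'+k^2\partial_x^{-1}\psi+\partial_x(R^p\psi)$ with $k=1/L$, and one checks that ${\mathcal A}_k=-\partial_x{\mathcal L}_k$ with the self–adjoint operator
$$
{\mathcal L}_k=-\partial_x^2+1-R^p-k^2\partial_x^{-2},
$$
realized on the space $X=\{\psi\in L^2:\partial_x^{-1}\psi\in L^2\}$, on which $J=\partial_x$ is skew–adjoint and invertible. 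So the task becomes: produce, for every $k$ below a threshold $k^\star$, a real $\sigma>0$ and $0\neq\psi\in X$ with $\partial_x{\mathcal L}_k\psi=\sigma\psi$; the regularity $\psi\in\cap_s H^s$ and $\partial_x^{-1}\psi\in L^2$ then follow a posteriori by bootstrapping $\psi=\sigma^{-1}\partial_x{\mathcal L}_k\psi$, using that $R$ is Schwartz.

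The heart of the argument is the monotone dependence on $k$. For $\psi\in X$,
$$
\langle{\mathcal L}_k\psi,\psi\rangle=\|\psi'\|_{L^2}^2+\|\psi\|_{L^2}^2-\int R^p\psi^2+k^2\|\partial_x^{-1}\psi\|_{L^2}^2,
$$
which is nondecreasing in $k^2$. At $k=0$, ${\mathcal L}_0=-\partial_x^2+1-R^p$ is the linearized gKdV operator; since $R$ is a positive single bump, $R'$ is odd with one zero and solves ${\mathcal L}_0R'=0$, so by Sturm theory ${\mathcal L}_0$ has exactly one negative eigenvalue and kernel spanned by $R'$. For $k>0$ the zero mode is lifted, $\langle{\mathcal L}_kR',R'\rangle=k^2\|R\|_{L^2}^2>0$, while the ground state stays negative for $k$ small; hence $n({\mathcal L}_k)=1$ with trivial kernel for small $k>0$. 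On the other hand $\xi^2+k^2\xi^{-2}\geq 2k$ gives $\langle{\mathcal L}_k\psi,\psi\rangle\geq(1+2k-\|R^p\|_{\infty})\|\psi\|_{L^2}^2>0$ for $k$ large, so $n({\mathcal L}_k)=0$ there. By monotonicity the negative eigenvalue increases in $k$ and crosses $0$ at a unique value $k^\star=k^\star_p$; I set $L^\star_p=1/k^\star_p$, so that $n({\mathcal L}_k)=1$ and $\ker{\mathcal L}_k=\{0\}$ for all $0<k<k^\star_p$, i.e. for all $L>L^\star_p$.

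It remains to convert the negative direction of the self–adjoint ${\mathcal L}_k$ into a genuine real unstable eigenvalue of the Hamiltonian operator $J{\mathcal L}_k=\partial_x{\mathcal L}_k$. On $X$ one has $\ker J=\{0\}$, hence $\ker(J{\mathcal L}_k)=\ker{\mathcal L}_k=\{0\}$, and the Hamiltonian–Krein instability index reads $k_r+2k_c+2k_i^-=n({\mathcal L}_k)=1$; the even parity of $2k_c,2k_i^-$ forces $k_r=1$, i.e. $J{\mathcal L}_k$ has exactly one pair $\pm\sigma$ of real eigenvalues with $\sigma>0$. This is the mechanism already used, for each fixed $k$ in an interval, in \cite{RT1,RT2,RT4}, and for $p=1$ it can be made completely explicit through the computation of \cite{Pego} underlying Lemma~\ref{lem2bis}, which yields the sharp value $k^\star_1=\sqrt3/4$, that is $L^\star_1=4/\sqrt3$. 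Taking $\sigma$ the largest such eigenvalue produces the eigenmode required in \eqref{eigen}.

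The hard part is precisely this last passage: knowing that ${\mathcal L}_k$ is merely indefinite is not enough, since one must exclude that the unstable spectrum of $J{\mathcal L}_k$ be carried solely by a complex quadruplet or by purely imaginary eigenvalues of negative Krein signature. This is why it is essential to pin down $n({\mathcal L}_k)=1$ \emph{exactly} (through the Sturm count at $k=0$ combined with the monotonicity), and not just $n({\mathcal L}_k)\geq 1$; the oddness of $n({\mathcal L}_k)$ is what guarantees a real unstable mode. A secondary technical point is the correct choice of functional setting $X$, so that $J=\partial_x$ is invertible and the index theory genuinely applies, and so that the resulting eigenfunction indeed satisfies $\partial_x^{-1}\psi\in L^2$. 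The monotonicity of the family $\{{\mathcal L}_k\}$ is exactly the new ingredient that upgrades the previously known ``interval of $L$'s'' to the half–line $(L^\star_p,\infty)$.
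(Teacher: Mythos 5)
Your reduction to a one-parameter family of self-adjoint operators in $k=1/L$ and your identification of the threshold $k^\star$ as the unique value where the bottom eigenvalue crosses zero are essentially the paper's strategy: the paper also locates a unique $k_0$ with $L(k_0)\geq 0$ and one-dimensional kernel and sets $L^\star=1/k_0$. But in two places you assert exactly what has to be proven.

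First, a point that looks cosmetic but is not. The paper does not work with $\mathcal{L}_k=-\partial_x^2+1-R^p-k^2\partial_x^{-2}$ acting on $\psi$; it substitutes $\psi=\partial_x U$ and studies $L(k)=-\partial_x(-\partial_{xx}+1-R^p)\partial_x+k^2$ acting on $U$ (equation \eqref{eqU}). Then the $k$-dependence is $+k^2\,\mathrm{Id}$, so the spectrum of $L(k)$ is that of $L(0)$ shifted up by $k^2$, and the monotonicity, the uniqueness of the crossing, and the persistence of exactly one negative eigenvalue for all $0<k<k_0$ are immediate. In your parametrization the claim ``the ground state stays negative for $k$ small'' is not a perturbation statement you can read off: the ground state of $\mathcal{L}_0$ is positive, hence has nonzero mean, hence $\partial_x^{-1}\psi_0\notin L^2$, so it is inadmissible as a test function for any $k>0$, and the form perturbation $k^2\|\partial_x^{-1}\psi\|_{L^2}^2$ is unbounded relative to $\mathcal{L}_0$. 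Making this step clean essentially forces you back to the paper's substitution.

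Second, and this is the real gap: the passage from $n(\mathcal{L}_k)=1$ to $k_r=1$ via the equality $k_r+2k_c+2k_i^-=n(\mathcal{L}_k)$ is the heart of the lemma, and you invoke it as a black box in a setting where its hypotheses are not verified. The standard Hamiltonian--Krein counts require $J$ to have a bounded inverse (or a carefully built substitute); here $J=\partial_x$ has unbounded inverse and $\mathcal{L}_k$ contains $\partial_x^{-2}$, and no off-the-shelf index theorem for KP-I is being cited. Note that \cite{Pego-Weinstein}, which the paper does use, supplies only the \emph{upper} bound --- at most one eigenvalue of positive real part, necessarily real; the \emph{lower} bound, i.e.\ the existence of the real unstable eigenvalue, is precisely what your parity argument is supposed to deliver and precisely what the paper proves by hand. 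Its route is: an implicit-function-theorem bifurcation from the kernel of $L(k_0)$, giving $k(\sigma)=k_0-\kappa\sigma^2+\cdots$ with $\kappa>0$ (hence instability for $k$ slightly below $k_0$), followed by continuation of the curve $\sigma(k)$ down to $k=0$, the degeneration $\sigma(K^*)=0$ at an interior $K^*>0$ being excluded because it would force a nontrivial kernel of $L(K^*)$, contradicting the uniqueness of $k_0$. You flag the functional setting as a ``secondary technical point,'' but it is the main one: to keep your route you must either verify the hypotheses of an index theorem valid for this $J$ and $\mathcal{L}_k$, or run the Evans-function parity argument of \cite{Pego-Weinstein} directly (sign of the Evans function at $\lambda=0^+$ versus $\lambda=+\infty$), neither of which is free.
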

\begin{proof}
By using the above remark, it suffices to construct a solution 
of $\mathcal{A} \varphi= \sigma \varphi, \quad \sigma >0$.
 This  eigenvalue problem can be set in the framework of \cite{RT4}. We get that $\psi$ solves 
$$\sigma  \psi= \partial_{x} \big( - \partial_{xx}  - k^2 \partial_{x}^{-2}+  1 -  R^{p} \big) \psi.$$
  We are interested in values of $k$ under the form  $k={1 \over L}$. 
    For $k \neq 0$,  $\psi$ is necessarily under  the form $\psi= \partial_{x} U$ and 
  we get  that $U$ solves
\beq
\label{eqU} - \sigma  \partial_{x} U=\Big(-  \partial_{x}(  - \partial_{xx}  +1 -    R^{p} ) \partial_{x}  +k^2 \Big)U.
\eeq
 Therefore, this eigenvalue problem  can be put in the framework of \cite{RT4} with
 $$ A(k)= - \partial_{x}, \quad L(k)
  =-  \partial_{x}( -  \partial_{xx}  + 1 -   R^{p} ) \partial_{x}  +k^2.$$
  By using \cite{RT4}, we  get that there exists $k_{0}>0$ such that $L(k_{0})$ is nonnegative and  has a one-dimensional kernel.
   Moreover, since $L'(k)$ is positive in the sense of symmetric operators, we get that there exists a unique
    $k_{0} \neq 0$  such that $L(k_{0})$ has a non-trivial kernel.
   Moreover,  thanks to the implicit function Theorem  we have shown in \cite{RT4} that  for every $\sigma$  real and close to zero, there exists  $k(\sigma)$, $U(\sigma)$,
    depending smoothly on $\sigma$, and solutions  
    of \eqref{eqU} such that $k(0)= k_{0}$ and $U(\sigma)= \chi + W(\sigma)$, $W(0)= 0$, with
     $\chi$ an element of the kernel of $L(k_{0})$ and 
    \beq
    \label{contrainte1} (W(\sigma), \chi)=0, \quad \|\chi \|_{L^2(\mathbb{R})}=1
    \eeq
    where  $(\cdot, \cdot)$ stands for the $L^2(\mathbb{R})$ scalar product.
    
    By taking the derivative of \eqref{eqU} with respect to $\sigma$, we first obtain that
  $$ -\partial_{x}\chi=k'(0) L'(k_{0}) \chi + L(k_{0}) W'(0).$$
   Consequently, by taking the scalar product with $\chi$, we get   that
   \beq
   \label{der1}
    k'(0)=0, \quad  L(k_{0}) W'(0)= -\partial_{x} \chi.
    \eeq
    Next, we can compute the second derivative. This yields
    $$ -  2\partial_{x} W'(0)=k''(0) L'(k_{0}) \chi + L(k_{0}) W''(0)$$
     and hence by using \eqref{der1},  we obtain that
    $$ k''(0)=  - 2  {(\partial_{x} W'(0), \chi) \over  (L'(k_{0}) \chi, \chi) }=  -  2 { (L(k_{0}) W'(0), W'(0)) \over  (L'(k_{0}) \chi, \chi) } <0.$$
    Indeed, the numerator is positive  by using that $ L(k_{0})$ is positive on the orthogonal of $\chi$ and that   $W'(0)$ is orthogonal  to $\chi$ thanks to \eqref{contrainte1}.
    This proves that  for $\sigma$ close to zero,  we have
    $$ k(\sigma)= k_{0}- \kappa \sigma^2 + \cdots$$
    with $\kappa >0$ and hence that the instability occurs for $k <k_{0}$.
   Moreover, by using the  Appendix of \cite{Pego-Weinstein}, 
     since $L(k)$ has at most one negative eigenvalue, we know  that there exists at most one solution of \eqref{eqU} with $\sigma$ of positive real part 
     (and thus 
      that $\sigma$ is necessarily  real).
       Consequently we get that there exists  a continuous curve $\sigma(k)$ defined on a maximal interval $(K^*, k_{0})$ such
        that  $\sigma(k)>0$ for every $k$ in this interval. We claim that $K^*=0$. Indeed,  if $K^*>0$ since $\sigma$  remains  bounded
         (see \cite{RT2})  the only possibility is that $ \lim_{k \rightarrow K^*} \sigma(k)=0$. But this implies that $L(K^*)$ has a non-trivial kernel
     which is a contradiction. Consequently, we get that there is a nontrivial solution of \eqref{eqU} with $\sigma >0$
      for every $k \in (0, k_{0})$. To conclude, it suffices to remember that $k$ must be under the form 
      $ k= 1/L$. Hence we get a solution  for $L>1/ k_{0}$.
   \end{proof}
   
  \begin{rem}
  \label{optim}
   Note that by the above argument, we also get that $L(k)$ has no nonpositive  eigenvalue for $0<|k|<k_{0}$ and hence
    that $L(n/c)$, $n \in \mathbb{Z}$ has no  non positive  eigenvalue for $c<1/k_{0}$. This yields that in the case of the gKP-I equation, 
    Lemma \ref{lem2bis} and hence the formal stability of the solitary wave holds for every $c<{1/k_{0}}$.
     \end{rem}
  By using the above Lemma \ref{lem+}, we get that for $L>L^\star$  there exists a nontrivial solution of \eqref{eigen}. 
   Next as in \cite{RT1}, \cite{RT2}, we can choose $n_{0}\neq 0 $ such that $\sigma $ is maximal.
   
 For the end of the proof,   we shall  only consider the case $p=1$, the analysis for other $p$'s being analogous.

For $k\geq 2$, $u_k(t)$ is defined as a solution of the linear problem
$$
\partial_{t}u+{\mathcal A}u+\frac{1}{2}\partial_{x}\Big(\sum_{j=1}^{k-1}u_j u_{k-j}\Big)=0
$$
such that $\lim_{t\rightarrow+\infty}\|u_k(t)\|_{H^s}=0$, i.e.
$$
u_{k}(t)=-\frac{1}{2}\int_{t}^{\infty}e^{-(t- \tau){\mathcal A}}\partial_{x}\Big(\sum_{j=1}^{k-1}u_j(\tau) u_{k-j}(\tau)\Big)d\tau.
$$
To estimate $u_{k}$, we can use that 
thanks to \cite{RT1,RT2}, we have that  for  every $s\geq 2$,
\beq
\label{semigroup}
\|e^{- t{\mathcal A}(n)}\|_{H^{s+ 1}(\R)\rightarrow H^s(\R)}\leq C(n)e^{(\sigma+\eta)|t|},\quad t  \in \mathbb{R},
\eeq
provided $\eta\in (0,\sigma)$, where ${\mathcal A}(n)$ is the Fourier transform of ${\mathcal A}$ with respect to $y$, i.e. 
$$
{\mathcal A}(n)(w)=(\partial_x^3-\partial_x)w+n^2\partial_{x}^{-1}w+\partial_{x}(Qw)\,.
$$
Indeed, in \cite{RT1,RT2}, (see  Theorem 10 of \cite{RT1}), we have proven  that for every $F(t,x)$ such that
 $ \|F(t)\|_{H^{s+1}(\mathbb{R})} \leq  C_{s+1}e^{\gamma t}, $ $ \gamma  > \sigma$,  the solution of
$$ \partial_{t} u + \mathcal{A}(n) u = \partial_{x} F, \quad u_{/t=0}= 0$$
satisfies the estimate
\beq
\label{estRT1} \|u(t) \|_{H^s} \lesssim C_{s+1}e ^{\gamma  t}, \quad \forall t \geq 0.\eeq
Note that   in the statement of  Theorem 10 of \cite{RT1} the assumption $\gamma \geq   2 \sigma$ is made but that the result indeed holds
 as soon as $\gamma$ is  such that $\gamma >\sigma$ (see the resolvent estimate of Theorem 11).
 From this estimate, one can easily obtain the semigroup estimate \eqref{semigroup}. Since  we want to estimate
  the solution of
  $$ \partial_{t} v + \mathcal{A}(n) v= 0, \quad v_{/t=0}= v_{0}, $$
  we can estimate it by using the decomposition 
  $$ v(t)= w(t)+ u(t)$$
  where 
  $w(t)$ solves
  $$ \partial_{t} w + A(n)w= 0, \quad w_{/t=0}= v_{0}, \quad A(n) = \partial_{x}^3 -\partial_{x} + n^2 \partial_{x}^{-1}$$
   and $u$ solves
   $$ \partial_{t} u + \mathcal{A}(n) u= - \partial_{x}\big( Q w\big), \quad  w_{/t=0}= 0.$$
   From the explicit expression in the Fourier side, we immediately get that for every $s\geq 0$, we have
   $$ \|w(t) \|_{H^s} \leq  \|v_{0}\|_{H^s}$$
    and then by using \eqref{estRT1}, we get that
    $$ \|u(t)\|_{H^{s}} \lesssim e^{\gamma t} \|v_{0}\|_{H^{s+1}}$$
    provided $\gamma >\sigma$.
   The estimate \eqref{semigroup} for $t\geq 0$  then follows. The estimate for negative times is again the consequence
    of the symmetry $t\rightarrow -t$, $x\rightarrow -x$ of the equation.
  
Therefore, using that $u_{1}$ and thus  $u_2$ has a compactly supported Fourier transform in $y$, we infer that for $t\geq 0$,
$$
\|u_2(t)\|_{H^s(\R\times \R\slash (2\pi L \Z))}
\leq C\int_{t}^{\infty}e^{(\tau-t)(\sigma+\eta)}e^{-2\sigma\tau}d\tau\leq Ce^{-2\sigma t}\,.
$$
Similarly, one obtains by induction  that 
\beq
\label{uk}
\|u_k(t)\|_{H^s(\R\times \R\slash (2\pi L \Z))}
\leq Ce^{-k\sigma t}
\eeq
and as a consequence 
\begin{equation}\label{decay}
\|\mathcal R(t)\|_{H^s(\R\times \R\slash (2\pi L \Z))}\leq C\delta^{M+1}e^{-(M+1)\sigma t}\,,
\end{equation}
where
$$
\mathcal R\equiv (\partial_t+A)u_{ap}+\frac{1}{2}\partial_{x}(u_{ap}^2)\,.
$$
Coming back to \eqref{anz}, we obtain that $v(t)$ solves the problem
\begin{equation}\label{pbv}
\partial_{t}v+Av+2\partial_{x}(u_{ap}v)+v\partial_{x}v+\mathcal R=0.
\end{equation}
We shall construct a solution of \eqref{pbv} as a limit of a sequence $(v_n)_{n\geq 0}$ defined as follows.
We set $v_0=0$ and for a given $v_n$, we define $v_{n+1}$ as the solution of the linear problem
\begin{equation}\label{iter}
\partial_{t}v_{n+1}+Av_{n+1}+2\partial_{x}(u_{ap}v_{n+1})+v_n\partial_{x}v_{n+1}+\mathcal R=0
\end{equation}
which vanishes at $+\infty$. Namely, if we denote by $S_{v_n}(t,\tau)$ the flow of the linear problem
\begin{equation}\label{cler}
\partial_{t}u+Au+2\partial_{x}(u_{ap}u)+v_n\partial_{x}u=0,
\end{equation}
we define $v_{n+1}$ as
$$
v_{n+1}(t)=-\int_{t}^{\infty}S_{v_n}(\tau,t)(\mathcal R(\tau))d\tau.
$$
Taking for $s>2$ the $H^s$ scalar product of \eqref{cler} with $u$ gives that the solutions of \eqref{cler} satisfy the energy estimate
$$
\frac{d}{dt}\|u(t)\|_{H^s}^2\leq C\Big(\|u_{ap}(t)\|_{W^{s+1,\infty}}+\|v_n(t)\|_{H^s}\Big)\|u(t)\|_{H^s}^2\,.
$$
Therefore, we obtain that if $v_n$ ranges in a fixed ball of $L^{\infty}([0,+\infty);H^s)$ then
\begin{equation}\label{semi}
\|S_{v_n}(t,\tau)\|_{H^s\rightarrow H^s}\leq e^{\Lambda t},\quad t\geq 0,
\end{equation}
where $\Lambda\geq C+C\delta M$. We now fix $M$ large enough and $\delta$ small enough so that 
$C+C\delta M<\sigma(M+1)$ and choose $\Lambda$ in between. Therefore, using \eqref{decay} and \eqref{semi}, 
we obtain that  for every $s>2$, there exist $C_{s}>0$ and $\alpha>0$ such that for every $n\geq 0$ and
every $t\geq 0$,
\beq
\label{weak}
\|v_{n+1}(t)\|_{H^s}\leq C \delta^{M+1}e^{-(M+1) \sigma  t}.
\eeq

Next since $\partial_{t} v_{n}$ is bounded in $\mathcal{C}([0, T], L^2)$ for every $T$, we get by standard arguments
 that    there exists   $v \in \mathcal{C}(\mathbb{R}_{+}, H^\sigma)$    for every $\sigma <s$ such that,   up to a subsequence, 
  $v_{n}$ converges to $v$ in $\mathcal{C}_{loc}(\mathbb{R}_{+}, H^\sigma)$. This yields that $v$ satisfies the equation \eqref{pbv}.
  Moreover, by passing to the weak limit in \eqref{weak}, we get that $v$ satisfies 
  \begin{equation}\label{borne}
\|v(t)\|_{H^s}\leq Ce^{-\sigma(M+1)  t},\quad t\geq 0.
\end{equation}

Since $u= u_{ap}+ v$, we immedately  get thanks to  \eqref{u1}, \eqref{uk}
 that 
 \beq
 \label{borneHs} \|u - Q \|_{H^{s}(\mathbb{R}\times \mathbb{R}/\slash 2\pi L \mathbb{Z})} \lesssim e^{- \sigma t}.\eeq
Let us now get the claimed bound in $Z^1$. For that purpose it suffices to recall that $u=u_{ap}+v$  solves \eqref{star} and observe that $u$ is a perfect
$x$ derivative, i.e.
$$
u(t)=-\frac{1}{2}\partial_{x}\int_{t}^{\infty}e^{-(t- \tau)A}(u^2(\tau))d\tau.
$$
Indeed, this yields
 $$ \|\partial_{x}^{-1} \partial_{y} u \|_{L^2} \lesssim \int_{t}^{\infty} \|\partial_{y}(u^2)\|_{L^2} d\tau$$
 and since $Q$ does not depend on $y$, we get from \eqref{borneHs} that
 $$  \|\partial_{x}^{-1} \partial_{y} u \|_{L^2} \lesssim e^{-\sigma t}.$$
 
Finally, the condition $\partial_{y}u(0,x,y)\neq 0$ may be achieved for $\delta\ll 1$, since $\partial_{y}u_1(0,x,y)\neq 0$.

This completes the proof of Theorem~\ref{sofia}.
\section{Acknowledgment} 
We benefited from a discussion with Nicolas Burq. The second author is supported by an ERC grant. 

\end{document}